\newcommand{\kommentar}[1]{}
\newcommand{\C}{\mathbb{C}}
\newcommand{\Q}{\mathbb{Q}}
\newcommand{\Z}{\mathbb{Z}}
\newcommand{\HH}{\mathbb{H}}
\newcommand{\TT}{\mathbb{T}}
\newcommand{\m}{\mathrm{m}}
\newtheorem{thm}{Theorem}
\newtheorem{defn}[thm]{Definition}
\newtheorem{lem}[thm]{Lemma}
\newtheorem{rem}[thm]{Remark}
\newcommand{\Li}{\mathrm{Li}} 
\newcommand{\im}{\mathop{\mathrm{Im}}} 
\title{Regulator proofs for Boyd's identities on genus 2 curves}
\author[M. \ Lal\'in]{Matilde Lal\'in}
\address{Universit\'e de Montr\'eal, Pavillon Andr\'e-Aisenstadt, D\'ept. de math\'ematiques et de statistique, CP 6128, succ. Centre-ville Montréal, Qu\'ebec, H3C 3J7, Canada}
\email{mlalin@dms.umontreal.ca}
\author[G. Wu]{Gang Wu}
\address{Universit\'e de Montr\'eal, Pavillon Andr\'e-Aisenstadt, D\'ept. de math\'ematiques et de statistique, CP 6128, succ. Centre-ville Montréal, Qu\'ebec, H3C 3J7, Canada}
\email{gang.wu.1@umontreal.ca }
\thanks{This research was supported by the Natural Sciences and Engineering Research Council
of Canada [Discovery Grant 355412-2013]}
\subjclass[2010]{Primary 11R06; Secondary  11G05, 11F66, 19F27, 33E05}
\keywords{Mahler measure; special values of $L$-functions; genus 2 curves; elliptic curve; elliptic regulator}
\begin{document}

\maketitle

\begin{abstract}
We use the elliptic regulator to
recover some identities between Mahler measures involving certain families of genus 2 curves
that were conjectured by Boyd and proven by Bertin and Zudilin by differentiating the Mahler measures and using hypergeometric identities. Since our proofs involve the regulator, they yield light 
into the expected relation of each Mahler measure to special values of $L$-functions of certain elliptic curves. 

\end{abstract}

\section{Introduction}

The (logarithmic) Mahler measure of a non-zero rational function $P \in \C(x_1,\dots,x_n)$ is defined by 
\begin{equation*}
 \m(P):=\frac{1}{(2\pi i)^n}\int_{\mathbb{T}^n}\log|P(x_1,\dots, x_n)|\frac{dx_1}{x_1}\cdots \frac{dx_n}{x_n},
\end{equation*}
where $\mathbb{T}^n=\{(x_1,\dots,x_n)\in \mathbb{C}^n : |x_1|=\cdots=|x_n|=1\}$. 

This definition often yields special values of functions with number theoretic significance, 
such as the Riemann zeta-function and $L$-functions associated to arithmetic-geometric objects such as elliptic curves. 

The relationship with $L$-functions of elliptic curves was predicted by Deninger \cite{Deninger} who connected the Mahler measure to certain regulator expected to be related to a special value of an $L$-function by means of Be\u\i linson's conjectures.  
This was further investigated in detail by Boyd \cite{Bo98}, who conducted a systematic study of certain families of two-variable polynomials and found many numerical examples such as 
\begin{equation}\label{eq:identity}
\m(P_\alpha)\stackrel{?}{=}r_\alpha L'(E_\alpha,0),
\end{equation}
where 
\[P_\alpha(x,y)=(x+1)(y+1)(x+y)-\alpha xy,\]
$\alpha$ is an integer, $r_\alpha$ is a rational number of small height, 
and 
\begin{equation}\label{eq:elliptic}
E_\alpha:Y^2+(\alpha-2)XY+\alpha Y=X^3
\end{equation} is the elliptic curve corresponding to the zero loci of $P_\alpha(x,y)$. 

Rodriguez-Villegas \cite{RV} studied these identities in the context of Be\u\i linson's conjectures
and was able to prove those related to elliptic curves of conductor 36, which have complex multiplication. 
Subsequently, other cases were proven involving conductors 14 and 20 (see Table \ref{table}). 
\begin{table}\label{table}
\begin{tabular}{|r|c|c|l|}
\hline
$\alpha$ & $r_\alpha$ & $N$ & Proven by\\
\hline
\hline
$-4$ & $2$ & $36$  & Rodriguez-Villegas \cite{RV}\\
$2$ & $1/2$ & $36$ & Rodriguez-Villegas \cite{RV}\\
$-8$ & $10$ & $14$ & Mellit \cite{Me12}\\
$1$ & $1$ & $14$ & Mellit \cite{Me12}\\
$7$ & $6$ & $14$ & Mellit \cite{Me12}\\
$-2$ & $3$ & $20$ & Rogers--Zudilin \cite{RZ12}\\
$4$ & $2$ & $20$ & Rogers--Zudilin \cite{RZ12}\\
\hline
\end{tabular}
\caption{Proven cases of formula \eqref{eq:identity}.}
\end{table}

Boyd \cite{Bo98} also investigated some families of higher genus such as 
\[Q_\alpha(x,y)=(x^2+x+1)y^2+\alpha x(x+1) y + x(x^2+x+1),\]
\[R_\alpha(x,y)=(x^2+x+1)y^2+(x^4+\alpha x^3+(2\alpha-4)x^2+\alpha x+1)y+x^2(x^2+x+1),\]
and
\[S_\alpha(x,y)=y^2+(x^4+\alpha x^3+2\alpha x^2+\alpha x +1) y +x^4,\]
which correspond to families (3-3), (3-9), and (3-13) in \cite{Bo98}. 

The Jacobian associated to the genus 2 curve $S_\alpha(x,y)=0$ splits as a product of two elliptic curves, one of which is
$E_\alpha$ given by \eqref{eq:elliptic}. Boyd found numerical relations of the type
\[\m(S_\alpha)\stackrel{?}{=}s_\alpha L'(E_\alpha,0).\]
Similarly, the Jacobians associated to $Q_\alpha(x,y)=0$ and $R_\alpha(x,y)=0$ can also be written as product of 
two elliptic curves, with a common factor, and Boyd found that the Mahler measures are numerically related to the $L$-function corresponding to the common factor. 

The above findings led Boyd to conjecture relationships 
between the Mahler measures of $P_\alpha$ and $S_\alpha$ and between the Mahler measures of 
$Q_\alpha$ and $R_\alpha$. These results were eventually proven by Bertin and Zudilin and are summarized as follows.

\begin{thm}\label{thm:BZ}\cite{BZ} For $\alpha$ taking real values, we have
 \[\m(S_\alpha) = \left\{\begin{array}{cl} 2 \m(P_{\alpha})& 0\leq \alpha \leq 4,\\ \\ 
 \m(P_{\alpha}) & \alpha \leq -1. \end{array}\right.\]
\end{thm}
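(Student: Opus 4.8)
The plan is to realize both Mahler measures as regulator integrals on their respective curves and then transport them to a common elliptic curve. Writing each polynomial as a quadratic in $y$, one checks that in both cases the product of the two $y$-roots has absolute value $1$ on $|x|=1$ (for $S_\alpha$ the product is $x^4$, for $P_\alpha$ it is $x$), so the leading-coefficient term in Jensen's formula drops out and
\[
\m(S_\alpha)=\frac{1}{2\pi}\int_0^{2\pi}\log\max(|y_1|,|y_2|)\dd\theta,\qquad
\m(P_\alpha)=\frac{1}{2\pi}\int_0^{2\pi}\log\max(|y_1|,|y_2|)\dd\theta,
\]
with $x=\e^{\ii\theta}$ and $y_1,y_2$ the roots of the relevant polynomial. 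On $|x|=1$ one has $\eta(x,y)=-\log|y|\,\dd\theta$ for the regulator form $\eta(x,y)=\log|x|\,\dd\arg y-\log|y|\,\dd\arg x$, so each integral equals $-\frac{1}{2\pi}\int_{\gamma}\eta(x,y)$ over the cycle $\gamma$ on the curve where $|x|=1$ and $|y|\ge 1$. First I would record this regulator description for both families.

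Next I would exploit the reciprocal symmetries. The curve $S_\alpha=0$ is invariant under $\sigma\colon(x,y)\mapsto(1/x,\,y/x^4)$ and $P_\alpha=0$ under $(x,y)\mapsto(1/x,1/y)$. Setting $w=x+1/x$ and $z=y/x^2$, the equation $S_\alpha=0$ becomes $z^2+(w^2+\alpha w+2\alpha-2)z+1=0$, that is $z+1/z=-(w^2+\alpha w+2\alpha-2)$; since $w$ and $z$ are $\sigma$-invariant this exhibits $(S_\alpha=0)/\sigma$ as a double cover of the $w$-line branched at four points $w\in\{-2,\,2-\alpha,\dots\}$, hence an elliptic curve isogenous to $E_\alpha$. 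Because $\{x,y\}=\{x,z\}+2\{x,x\}$ and $2\{x,x\}=0$, the symbol on the genus $2$ curve agrees, modulo torsion, with $\{x,z\}$, so the regulator descends to this elliptic quotient; the analogous manipulation places the symbol of $P_\alpha=0$ in $K_2(E_\alpha)\otimes\Q$. The point is that both Mahler measures are regulators of one and the same class.

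I would then reduce to one-dimensional integrals. Since $|z_i|=|y_i|$ on $|x|=1$ and the $z_i$ depend only on $w=2\cos\theta$, and since the $P_\alpha$-integrand is likewise invariant under $\theta\mapsto-\theta$, folding the circle gives
\[
\m(S_\alpha)=\frac{1}{\pi}\int_{-2}^{2}\frac{f(w)}{\sqrt{4-w^2}}\dd w,\qquad
\m(P_\alpha)=\frac{1}{\pi}\int_{-2}^{2}\frac{g(w)}{\sqrt{4-w^2}}\dd w,
\]
where $f$ and $g$ are explicit logarithms supported respectively on the loci $|w^2+\alpha w+2\alpha-2|\ge 2$ and $(w+2-\alpha)^2\ge 4(w+2)$. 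These two discriminant conditions change type exactly at $\alpha\in\{-1,0,4\}$, which is the origin of the case division in the theorem. A change of variables realizing the degree-$2$ covering $w\mapsto(\text{coordinate on }E_\alpha)$ then identifies the two integrals up to a factor equal to the covering degree, giving $\m(S_\alpha)=2\m(P_\alpha)$ for $0\le\alpha\le4$, whereas for $\alpha\le-1$ the real cycle sweeps $E_\alpha$ only once and the factor is $1$.

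The routine part is the symbol identity in $K_2$, obtained from tame-symbol and Steinberg relations together with the substitutions above. The main obstacle is the cycle bookkeeping: one must show that $\gamma_S$ maps onto $\gamma_P$ with multiplicity exactly $2$ for $0\le\alpha\le4$ and exactly $1$ for $\alpha\le-1$, and that the boundary and torsion contributions vanish. This requires a careful real-analytic study of the connected components of the real loci and of the branch points as $\alpha$ crosses the critical values $-1,0,4$; controlling this regime-dependent multiplicity is where the real work lies.
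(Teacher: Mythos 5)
Your overall architecture --- Jensen's formula to write both Mahler measures as $-\frac{1}{2\pi}\int_\gamma\eta(x,y)$ over the cycle $\{|x|=1,\ |y|\ge 1\}$, descent from the genus $2$ curve to an elliptic curve, identification of the two regulator classes, and a regime-dependent cycle multiplicity producing the factor $2$ versus $1$ --- is exactly the paper's. But two of your steps fail, and they sit precisely at the technical core. The first is that you descend to the \emph{wrong} elliptic curve. Your involution $\sigma\colon(x,y)\mapsto(1/x,\,y/x^4)$ acts in Bosman's coordinates $X_1=\frac{x+1}{x-1}$, $Y_1=\frac{4(y^2-x^4)}{y(x-1)^3(x+1)}$ by $(X_1,Y_1)\mapsto(-X_1,-Y_1)$, so the quotient whose invariants are $w$ and $z=y/x^2$ is the curve $V^2=Z_1h_1(Z_1)$, equivalently $V^2=(w+2)(w+\alpha-2)(w^2+\alpha w+2\alpha)$: this is the \emph{other} factor of the split Jacobian of $S_\alpha=0$, and it is in general not isogenous to $E_\alpha$. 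For instance, at $\alpha=1$ your quotient has $8$ points over $\F_5$, while $E_1\colon Y^2-XY+Y=X^3$ (conductor $14$) has $6$, so $a_5=-2\neq 0$ and no isogeny exists. The paper instead quotients by $(x,y)\mapsto(1/x,1/y)$, i.e.\ $(X_1,Y_1)\mapsto(-X_1,Y_1)$ --- under which your $z$ is \emph{not} invariant ($z\mapsto 1/z$) --- and that quotient, $Y_1^2=h_1(Z_1)$, equivalently $V^2=(w-2)(w+\alpha-2)(w^2+\alpha w+2\alpha)$, is the one birational to $E_\alpha$. Since the whole strategy is to compare against the class of $P_\alpha$, whose diamond element $(x)\diamond(y)=-6(P)-6(2P)$ lives on $E_\alpha$ (equation \eqref{eq:xy}), landing on a non-isogenous curve is fatal: neither periods nor regulator classes can be matched across the two factors, and the "degree-$2$ covering" change of variables you invoke cannot exist.

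The second problem is that, even with the correct involution, the descent is not the routine step you make it out to be, and your justification for it is invalid. Writing $\{x,y\}=\{x,z\}$ modulo torsion does not place the symbol on the quotient: $x$ is not invariant, its norm is $x\cdot(1/x)=1$, so the transfer of $\{x,z\}$ downstairs is $\{1,z\}=0$; equivalently, $\eta(x,z)$ is \emph{anti}-invariant under the involution, hence is not the pullback of any form on the quotient. This obstruction is exactly what Lemma \ref{lem:ab} (Bosman) is built to circumvent: one must produce functions $a,b$ pulled back from $E_\alpha$ with $a\,x_1+b\,y_1=1$ and apply the Steinberg relation twice (once after composing with the involution) to get $r_{C_\alpha}(\{x_1,y_1\})[\gamma]=-r_{E_\alpha}(\{a,b\})[\phi_*\psi_*\gamma]$; the paper's concluding remarks single out precisely this reciprocity condition as the key hypothesis. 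Relatedly, your endgame has no mechanism behind it. "Both Mahler measures are regulators of one and the same class" is, in the paper, the explicit divisor computation showing both diamond elements reduce to $-6(P)-6(2P)$ in $\Z[E_\alpha(\C)]^-$, which needs the $6$-torsion of $E_\alpha$, the divisors of $a,b$, and an auxiliary Steinberg relation to cancel the non-torsion points $U,V$. And the factor $2$ is not a covering degree: it is the ratio of the two integration cycles in the rank-one lattice $H_1(E_\alpha,\Z)^-$, which the paper determines by proving the period identities \eqref{eq:0a4} and \eqref{eq:a-1} through a chain of substitutions including a degree-$2$ isogeny and the symmetry $\alpha\mapsto-8/\alpha$. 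No pointwise change of variables carries one logarithmic integrand to the other; the comparison only closes after Bloch's theorem converts both sides into elliptic dilogarithms of the same divisor class, which is exactly the machinery your sketch bypasses.
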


\begin{thm} \label{thm:BZ2} \cite{BZ2} For real $\alpha\geq 4$, we have
\[\m(Q_\alpha)=\m(R_{2+\alpha}).\]
 \end{thm}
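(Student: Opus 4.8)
The plan is to prove Theorem~\ref{thm:BZ2} via the elliptic regulator, paralleling the strategy the paper announces for Theorem~\ref{thm:BZ}. The statement asserts an equality of two Mahler measures $\m(Q_\alpha)=\m(R_{2+\alpha})$, and since $Q_\alpha$ and $R_\alpha$ cut out genus~2 curves whose Jacobians share a common elliptic factor, I expect both Mahler measures to be expressible—up to boundary/torsion terms—as the same regulator integral evaluated over a cycle in the homology of that common elliptic curve. So the real content is not evaluating either Mahler measure in closed form, but showing the two regulator expressions coincide.

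First I would pass from each Mahler measure to a regulator. Writing the defining equation so that $y$ is an algebraic function of $x$ on the torus $|x|=1$, Jensen's formula in $y$ converts $\m(Q_\alpha)$ into an integral of $\log|y|$ (summed over the appropriate roots) along the arcs where $|y|\geq 1$, and similarly for $\m(R_{2+\alpha})$. The next step is to recognize these as evaluations of the regulator $r\{x,y\}$ on the respective curves, i.e.\ integrals of $\eta(x,y)=\log|x|\,d\arg y-\log|y|\,d\arg x$ over the relevant $1$-cycles; this identification requires checking that the symbol $\{x,y\}\in K_2$ is well-defined (its tame symbols vanish, so the boundary terms are trivial or controlled by torsion) and that the path of integration is closed. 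This is the standard Deninger/Boyd dictionary alluded to in the introduction, and I would invoke it for both families.

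The key step is the geometric comparison. I would produce an explicit birational map, or a common quotient, relating the curve $Q_\alpha=0$ to the curve $R_{2+\alpha}=0$—the shift $\alpha\mapsto 2+\alpha$ in the statement is exactly the signal that one family is reparametrized into the other. Under this map I would track what happens to the classes $\{x,y\}$ in $K_2$: the goal is to show that the two symbols differ by a sum of terms of the form $\{c,z\}$ with $c$ constant (which contribute nothing to the regulator) plus torsion, so that $r\{x,y\}_{Q_\alpha}$ and $r\{x,y\}_{R_{2+\alpha}}$ push forward to the \emph{same} class on the shared elliptic curve. Pulling the regulator integrals back to a common cycle on that elliptic curve, and matching orientations and the integration domains coming from the two tori, would then yield the equality of the two integrals and hence of the Mahler measures.

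The hard part will be this last comparison, on two fronts. First, identifying the explicit correspondence between the two genus~2 curves and verifying that it sends the relevant $K_2$ symbols to one another modulo decomposable and torsion terms is delicate; a diophantine or factorization identity in $\alpha$ will likely be needed to exhibit the map, and one must confirm that the elliptic factor picked out by $Q_\alpha$ really is the same (common) factor picked out by $R_{2+\alpha}$. Second, even granting that the symbols agree, matching the \emph{domains of integration}—the arcs of $|x|=1$ where the respective $|y|\geq 1$ conditions hold—requires a careful analysis of how the roots $y(x)$ migrate across $|y|=1$ as $x$ traverses the unit circle, since a mismatch there would introduce spurious boundary contributions. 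I would handle this by a continuity/degree argument in $\alpha\geq 4$, checking the endpoints and showing no extra crossings occur on the stated range, so that the two regulator integrals are taken over homologous cycles.
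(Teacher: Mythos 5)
Your overall strategy --- Jensen's formula to turn each Mahler measure into a regulator integral, pushing everything down to the common elliptic factor, then matching both the $K_2$ classes and the integration cycles --- is indeed the paper's strategy, so the architecture of your plan is sound. But the proposal has genuine gaps exactly where the mathematical content of the paper lies. First, your primary suggestion of ``an explicit birational map relating $Q_\alpha=0$ to $R_{2+\alpha}=0$'' would fail: these two genus 2 curves are not birational in general, since their Jacobians share only \emph{one} elliptic factor and the complementary factors differ; only your fallback option, mapping each curve onto the common quotient, can work. Second, and more seriously, you give no mechanism for transporting the symbol $\{x,y\}$ from a genus 2 curve to its elliptic quotient. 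The functions $x,y$ are not invariant under the involution defining the quotient map (they are sent to $1/x$, $1/y$), so the symbol does not descend directly, and ``tracking the classes in $K_2$'' has no content without a concrete device. The paper's device, Lemma \ref{lem:ab} (due to Bosman), is to find involution-invariant functions $a,b$ satisfying $ax+by=1$, and to exploit the reciprocal structure of the polynomials --- the involution $x\mapsto 1/x$, $y\mapsto 1/y$ corresponds to $X\mapsto -X$, $Y\mapsto Y$ --- so that the Steinberg relations on the two sheets combine to give $(x)\diamond(y)\sim -(a)\diamond(b)$, where $a,b$ are now rational functions on the common elliptic curve $F_\alpha$. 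Even after that, the equality of the two resulting diamond classes \eqref{eq:x2y2} and \eqref{eq:x3y3} is not automatic: the paper proves it by explicit divisor computations together with one further auxiliary Steinberg relation on $F_\alpha$.

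The cycle comparison is also left hanging. Since $H_1(F_\alpha,\Z)^-$ has rank one, each pushed-forward integration cycle is an integer multiple of a generator; a continuity/degree argument in $\alpha$ only shows these integers are locally constant, so you would still need to evaluate them at some base point, which your sketch never does. The paper instead computes the period of the invariant differential $\omega$ over both pushed-forward cycles explicitly, reducing the claim to an identity of two definite integrals, and proves that identity by the explicit substitution $t=\frac{(\alpha+1)s+\alpha-1}{2(2s+\alpha-2)}$ for $\alpha\geq 4$; equality of the periods then pins down the homology classes up to sign, and the sign is fixed because Mahler measures are nonnegative. In short, your proposal is a correct reading of the strategy, but it defers precisely the steps --- the symbol-transfer lemma, the divisor computations, and the period identity --- that constitute the proof.
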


Both results are achieved by studying the Mahler measures as functions on the parameter $\alpha$ and by differentiating respect to $\alpha$. The equalities are then established by using several hypergeometric identities. 

The family $S_\alpha(x,y)$ was also studied by Bosman in his thesis \cite{Bosman}. He considered the relationship with the regulator
and proved exact formulas for $\m(S_\alpha)$ in the cases $\alpha=8$, corresponding to genus 0, and 
$\alpha=-1,2$ corresponding to genus 1 ($\alpha=-1$ yields a degenerate case of a Dirichlet $L$-function, while $\alpha=2$ corresponds to an elliptic curve with complex multiplication). 
There is another case of genus 1 corresponding to $\alpha=4$, which can be proven by techniques of 
modular unit parametrizations \cite{Zu14,BZ} and the curve has genus 2 for all the other values of $\alpha$ \cite{Bosman}.  

Bosman \cite{Bosman} uses the regulator to relate the Mahler measure of the family $S_\alpha(x,y)$ to a combination 
of elliptic dilogarithms. A proof of Theorem \ref{thm:BZ} could be achieved by relating those elliptic dilogarithms to the ones 
corresponding to the Mahler measure of $P_\alpha(x,y)$. 

The goal of these notes is to provide further clarification by reproving Theorems \ref{thm:BZ} and 
\ref{thm:BZ2} by using the regulator theory. In the case of 
Theorem \ref{thm:BZ} this finishes the work started by Bosman. In the case of Theorem \ref{thm:BZ2}, the development is entirely new. 
Our proofs show the role of the regulator in these relationships, which is the key step to an eventual understanding of the conjectural relationship of each of these Mahler measures to their corresponding $L$-value.

This paper is organized as follows. Section \ref{sec:regulator} presents a general exposition of the relationship between Mahler measure and elliptic regulators. 
Then sections \ref{sec:1} and \ref{sec:2} treat Theorems \ref{thm:BZ} and \ref{thm:BZ2} respectively.
Each of these sections is separated into a first part where the relationship between the regulators is considered, and a second part, where the relationship between the cycles is considered.

\section{The regulator theory} \label{sec:regulator}

Here we recall the definition of the regulator on the second $K$-group of an elliptic curve $E$ 
given by Bloch  and Be\u\i linson  and explain how it can be computed in terms of the elliptic dilogarithm. 
We then discuss the relationship between Mahler measure and the regulator. 

Let $F$ be a field. Matsumoto's theorem implies that the second $K$-group of $F$ can be described as
\[K_2(F) \cong \Lambda^2 F^\times/\{x\otimes (1-x): x \in F, x\not = 0,1\}.\]

Let $E/\Q$ be an elliptic curve given by an equation $P(x,y)=0$. 
Rodriguez-Villegas \cite{RV} describes certain conditions on the polynomial $P$ that guarantee
the triviality of tame symbols and that $K_2(E)\otimes \Q \subset K_2(\Q(E)) \otimes \Q$.

Let $x,y \in \Q(E)$. We will work with the differential form 
\begin{equation}\label{eq:eta}
\eta(x,y) := \log |x| d \arg y - \log|y| d \arg x,
\end{equation}
where $d \arg x$ is defined by $\im(dx/x)$. 

The Bloch--Wigner dilogarithm is given by 
\begin{equation}\label{Bloch-Wigner}
D(x)= \im(\Li_2(x))+\arg(1-x) \log|x|,
\end{equation}
where
\[\Li_2(x)=-\int_0^x\frac{\log(1-z)}{z}dz.\]

The form $\eta(x,y)$ is closed in its domain of definition,  multiplicative, antisymmetric, and satisfies
\[\eta(x,1-x) = d D(x).\]

\begin{defn}The regulator map of Bloch \cite{Bloch} and Be\u\i linson \cite{Beilinson} is given by
\begin{eqnarray*}
r_E:K_2(E)\otimes \Q &\rightarrow& H^1(E,\mathbb{R})\\
\{x,y\}&\rightarrow &\left\{ [\gamma] \rightarrow \int_\gamma \eta(x,y)\right \}.
 \end{eqnarray*}
\end{defn} 
In the above definition, we take $[\gamma] \in H_1(E,\mathbb{Z})$ and interpret $H^1(E,\mathbb{R})$ as the dual of $H_1(E,\mathbb{Z})$. 


\begin{rem}\label{remark}
Due to the action of complex conjugation on $\eta$, the regulator map is trivial for the classes that remain invariant by complex conjugation, denoted by  $H_1(E,\mathbb{Z})^+$. It therefore suffices to consider the regulator as a function on $H_1(E,\mathbb{Z})^-$, a one-dimensional space.
\end{rem}

For  $E/\Q$ an elliptic curve, we have
\begin{equation}
\label{eq:diagrammaps}\begin{array}{ccccc}
E(\C)& \stackrel{\sim}{\rightarrow} &\C/(\Z+\tau\Z) &\stackrel{\sim}{\rightarrow}&\C^\times /q^\Z\\ \\
P=(\wp(u),\wp'(u))& \rightarrow & u \bmod \Lambda & \rightarrow & z=e^{2\pi i u},
\end{array}
\end{equation}
where $\wp$ is the Weierstrass function, $\Lambda$ is the lattice $\Z+\tau\Z$, $\tau \in \HH$, and $q=e^{2 \pi i \tau}$.

The next definition is due to Bloch \cite{Bloch}.
\begin{defn}
The elliptic dilogarithm is a function on $E(\C)$ given for $P\in E(\C)$ corresponding to $z \in \C^\times/q^\Z$ by 
\begin{equation}\label{eq:elldilogdef}
D^E(P):=\sum_{n \in \Z} D(q^nz),
\end{equation}
where $D$ is the Bloch--Wigner dilogarithm defined by \eqref{Bloch-Wigner}.
\end{defn}

Let $\Z[E(\C)]$ be the group of divisors on $E$ and let
\[\Z[E(\C)]^-\cong \Z[E(\C)]/\{ (P)+(-P): P\in E(\C)\}.\]

Let $x,y \in \C(E)^\times$. We define a diamond operation by 
\begin{eqnarray*}
\diamond: \Lambda^2 \C(E)^\times &\rightarrow &\Z[E(\C)]^-\\
(x)\diamond (y) &=& \sum_{i,j}m_in_j (S_i-T_j),
\end{eqnarray*}
where \[(x)=\sum_i m_i (S_i) \mbox{ and  }(y)=\sum_j n_j (T_j).\]

With these elements, we have the following result.
\begin{thm}\label{thm:bloch}(Bloch \cite{Bloch}) The elliptic dilogarithm $D^E$ extends by linearity to a map from $\Z[E(\Q)]^-$ to $\C$. Let $x, y \in \Q(E)$ and $\{x,y\}\in K_2(E)$. Then
\[r_E(\{x,y\})[\gamma]=D^E((x)\diamond(y)),\]
where $[\gamma]$ is a generator of $H_1(E,\Z)^-$.
 \end{thm}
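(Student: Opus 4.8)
The plan is to treat the asserted formula as an identity between two $\Z$-bilinear, antisymmetric functions of the pair $(x,y)$, and then to reduce everything to one transcendental computation with theta functions. First I would record the structural inputs: $\eta(\cdot,\cdot)$ is multiplicative and antisymmetric, the diamond product $\diamond$ is bilinear and antisymmetric, and $D^E$ is linear by construction, so both sides of the claim extend to bilinear antisymmetric maps on $\Lambda^2\C(E)^\times$. Moreover both sides must annihilate Steinberg elements $x\wedge(1-x)$: on the left because $\eta(x,1-x)=dD(x)$ is exact and $\gamma$ is a closed cycle, so $\int_\gamma dD(x)=0$; on the right one has to verify $D^E((x)\diamond(1-x))=0$, which is the elliptic incarnation of the five-term relation and is exactly where the precise definition of the elliptic dilogarithm is needed. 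Granting this, it suffices to prove the formula on a generating set of $\Lambda^2\C(E)^\times$.

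To obtain workable generators I would pass to the uniformization $E(\C)\cong\C^\times/q^\Z$ of \eqref{eq:diagrammaps} and use the theta function
\[\Theta(z)=\prod_{n\ge 0}(1-q^nz)\prod_{n\ge 1}(1-q^nz^{-1}),\]
whose zeros lie exactly on $q^\Z$, i.e. at the origin $O$ of $E$, and which satisfies the quasi-periodicity $\Theta(qz)=-z^{-1}\Theta(z)$. By Abel's theorem every $f\in\C(E)^\times$ is, up to a constant, a product $\prod_i\Theta(z/a_i)^{m_i}$ with $\sum_i m_i=0$ and $\prod_i a_i^{m_i}\in q^\Z$, the two constraints encoding that the divisor $\sum_i m_i(P_{a_i})$ has degree zero and sums to $O$. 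Using bilinearity I would then reduce $\eta(x,y)$ to a $\Z$-linear combination of the atomic forms $\eta(\Theta(z/a),\Theta(z/b))$. Although $\Theta(z/a)$ is not itself a function on $E$, the imposed constraints guarantee that the full combination descends, so each atomic integral may be computed on $\C^\times$ and the pieces reassembled.

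The heart of the argument is this atomic computation. I would represent the generator of $H_1(E,\Z)^-$ (only this class matters, by Remark \ref{remark}) by a suitable circle $|z|=c$, expand $\log|\Theta(z/a)|=\sum_n\log|1-q^nz/a|+\cdots$ termwise, and exploit that each factor $1-q^nw$ is of Steinberg type, so that $\eta$ applied against $q^nw$ yields, after rearrangement, the exact form $dD(q^nw)$. Integrating over the circle and summing the resulting boundary contributions over $n\in\Z$ should collapse precisely to $\sum_{n\in\Z}D(q^n\,ab^{-1})=D^E(P_a-P_b)$, which matches $D^E((x)\diamond(y))$ once the linear combination is put back together, since $(x)\diamond(y)=\sum_{i,j}m_in_j(P_{a_i}-P_{b_j})$ with $P_{a_i}-P_{b_j}\leftrightarrow a_ib_j^{-1}$.

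The main obstacle, I expect, is the rigorous bookkeeping in this last step: controlling convergence of the doubly infinite product and sum, correctly accounting for the multiplier in $\Theta(qz)=-z^{-1}\Theta(z)$ and for the non-descending ``constant'' and $d\arg$ terms (which contribute pieces of $\log|c|\int_\gamma d\arg x$ type that must cancel once the degree and sum-to-$O$ conditions are used), and pinning down the orientation and normalization of $\gamma$ so that the overall constant comes out to be exactly $1$. Establishing the vanishing $D^E((x)\diamond(1-x))=0$ from the first step, together with the convergence that makes the linear extension of $D^E$ to $\Z[E(\Q)]^-$ well defined and odd under $P\mapsto -P$, is of the same analytic nature, and I would treat these alongside the atomic estimate.
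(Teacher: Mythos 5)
You should first be aware that the paper itself contains no proof of this statement: it is quoted directly from Bloch's monograph \cite{Bloch} and used as a black box, so your attempt can only be measured against the classical argument, not against anything internal to the paper. Your overall skeleton does match that classical argument: uniformize $E(\C)\cong\C^\times/q^\Z$ as in \eqref{eq:diagrammaps}, factor $x$ and $y$ into translates of the theta function $\Theta$ via Abel's theorem, expand $\eta(x,y)$ by multilinearity into atomic pieces that do not individually descend to $E$ but whose sum does, integrate over a circle $|z|=c$ representing the generator of $H_1(E,\Z)^-$, and reassemble into $D^E((x)\diamond(y))$ using $(x)\diamond(y)=\sum_{i,j}m_in_j(S_i-T_j)$. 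That is the right architecture.

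The genuine gap is in the mechanism you propose for the atomic computation, and it cannot be repaired in the form stated. An atomic pair $\bigl(1-q^mz/a,\,1-q^nz/b\bigr)$ is \emph{not} a Steinberg pair (the second entry is not $1$ minus the first), so $\eta$ of it is not $dD$ of anything; and, more fundamentally, any part of $\eta$ that you do manage to write as the differential of a \emph{univalued} function such as $D(q^nw)$ integrates to exactly zero over the closed cycle $\gamma$ — that is precisely why Steinberg symbols die on the left-hand side. An ``exactness plus boundary contributions'' scheme over a closed cycle can therefore only pick up winding contributions of $d\arg$ around zeros of the entries, which are terms of the form $2\pi\log|\cdot|$: logarithms, never the $1/k^2$-series needed to build $\Li_2$ and $D$. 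The actual engine of Bloch's proof is Fourier-analytic: one expands $\log|1-z/a|$ (coefficients of size $1/k$) and $d\arg(1-z/b)$ in Fourier series on $|z|=c$ and applies Parseval; the pairing of two $1/k$-type series is what produces the $1/k^2$ dilogarithm series, and summing over the theta indices yields $\sum_{n\in\Z}D(q^na/b)=D^E(P_a-P_b)$, with the stray $\log$ terms cancelling by the degree-zero and sum-to-$O$ constraints. A secondary structural flaw: your reduction-to-generators requires $D^E((f)\diamond(1-f))=0$ as an \emph{input}, but this ``elliptic five-term relation'' is essentially as deep as the theorem itself — in Bloch's development it is a corollary of the regulator formula, not an ingredient — and your own computation never actually uses generators of $\Lambda^2\C(E)^\times$ anyway, since the theta atoms $\Theta(z/a)\wedge\Theta(z/b)$ do not lie in that group; the direct decomposition of the given pair $(x,y)$ makes this whole reduction step unnecessary.
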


 Deninger \cite{Deninger} was the first to write a formula of the form
\begin{equation}\label{eq:mregulator}
 \m(P) = \frac{1}{2\pi}r(\{x,y\})[\gamma].
 \end{equation}
Rodriguez-Villegas combined the above expression with Theorem \ref{thm:bloch} to prove an identity between two Mahler measures (originally conjectured in Boyd \cite{Bo98})
in \cite{RV2}. This was the first of the type of result that we consider in these notes. 

Let $P(x,y) \in \C[x,y]$ be a polynomial of degree 2 on $y$. 
We may then write 
\[P(x,y)=P^*(x)(y-y_1(x))(y-y_2(x)),\]
where $y_1(x), y_2(x)$ are algebraic functions. 

We recall a particular case of Jensen's formula. Let $\alpha \in \C$. Then
\[\frac{1}{2\pi i}\int_{\TT^1} \log |z-\alpha| \frac{dz}{z} = \left\{\begin{array}{cc}\log |\alpha| & |\alpha|\geq 1, \\ 0 & |\alpha|\leq 1.\end{array}
\right. \]

By applying Jensen's formula in the Mahler measure formula of $P(x,y)$ with respect to the variable $y$, we have
\begin{align*}
\m(P)-\m(P^*)=&\frac{1}{(2 \pi i)^2} \int_{\TT^2}\log |P(x,y)|\frac{dx}{x}\frac{dy}{y}-\m(P^*)\\
=& \frac{1}{(2 \pi i)^2} \int_{\TT^2}(\log |y-y_1(x)|+\log|y-y_2(x)|)\frac{dx}{x}\frac{dy}{y}\\
=& \frac{1}{2\pi i} \int_{|x|=1,|y_1(x)|\geq 1} \log|y_1(x)|\frac{dx}{x}+\frac{1}{2\pi i} \int_{|x|=1,|y_2(x)|\geq 1} \log|y_2(x)|\frac{dx}{x}.
\end{align*}
Recalling formula \eqref{eq:eta} for $\eta(x,y)$, we have,
\begin{align*}
\m(P)-\m(P^*) =& -\frac{1}{2 \pi} \int_{|x|=1,|y_1(x)|\geq 1} \eta(x,y_1)-\frac{1}{2 \pi} \int_{|x|=1,|y_2(x)|\geq 1} \eta(x,y_2).
\end{align*}

Often we will encounter the case that one of the roots $y_1(x)$ has always absolute value greater than or equal to 1 as $|x|=1$ 
and the other root has always absolute value smaller than or equal to 1 as $|x|=1$. This will allow us to write the right-hand side as a single term,
an integral over a closed path. 

When $P$ corresponds to an elliptic curve and when the set $\{|x|=1,|y_i(x)|\geq 1\}$ can be seen as a cycle in $H_1(E,\Z)^-$, then we may be able to recover a formula of the type \eqref{eq:mregulator}. This has to be examined on a case by case basis.


\section{The families from Theorem \ref{thm:BZ}}\label{sec:1}

\subsection{The relationship between the regulators}

Recall that the family $P_\alpha(x,y)$ is given by 
\[P_\alpha(x,y)=(x+1)(y+1)(x+y)-\alpha xy,\]
which is birational to the Deuring form
\[E_\alpha:Y^2+(\alpha-2)XY+\alpha Y=X^3.\]
The change of variables is given by 
\begin{align*}
 X(x,y)&=\alpha\frac{x+y+1}{x+y-\alpha}, & & x(X,Y)=\frac{X-Y}{X-\alpha},\\
Y(x,y)&=\alpha \frac{-\alpha x +y+1}{x+y-\alpha}, & & y(X,Y)=\frac{Y+(\alpha-1)X+\alpha}{X-\alpha}.
\end{align*}

The torsion group of $E_\alpha$ for $\alpha \in \Q$ has order 6, generated by $P=(\alpha,\alpha)$, with $2P=(0,0)$, $3P=(-1,-1)$, $4P=(0,-\alpha)$, $5P=(\alpha,-\alpha^2)$.

Our first goal is to compute the diamond operation $(x)\diamond (y)$ in $E_\alpha$. This will allow us to understand the differential 
form $\eta(x,y)$ that is involved in the computation of $\m(P_\alpha)$. Thus, we proceed to compute the divisors $(x)$ and $(y)$. 
\begin{align*}
(x)&= ((P)+(2P)+(3P)-3O)-((P)+(5P)-2O) \\
&= (2P)+(3P)-(5P)-O\\
(y)&= ((3P)+(4P)+(5P)-3O)-((P)+(5P)-2O) \\
&=-(P)+(3P)+(4P)-O
\end{align*}
The diamond operation yields
\begin{align}
(x)\diamond (y)&=-6(P)-6(2P)\label{eq:xy}.
\end{align}

Now we proceed to compute the diamond operation $(x_1)\diamond (y_1)$. Recall that 
\[S_\alpha(x_1,y_1)=y_1^2+(x_1^4+\alpha x_1^3+2\alpha x_1^2+\alpha x_1 +1) y_1 +x_1^4.\]
Bosman (\cite{Bosman}, p. 47) considers the curve
\[C_\alpha: Y_1^2=h_1(X_1^2),\]
where
\[h_1(Z_1)=(\alpha^2+\alpha)Z_1^3+(-2\alpha^2+5\alpha+4)Z_1^2+(\alpha^2-5\alpha+8)Z_1-\alpha+4.\]

We have rational maps 
\[\begin{array}{ccccc}
E_\alpha &\stackrel{\phi}{\rightarrow}& C_\alpha &\stackrel{\psi}{\rightarrow} &\{S_\alpha=0\}\\ \\
(Z_1,Y_1)&\rightarrow &(X_1, Y_1)& \rightarrow& (x_1,y_1)
 \end{array}\]
defined by $\phi(Z_1)=X_1^2$, $\phi(Y_1)=Y_1$, while
$\psi^{-1}$ and $\psi$ are given by 
\begin{align*}
X_1(x_1,y_1)=&\frac{x_1+1}{x_1-1},& & x_1(X_1,Y_1)=\frac{X_1+1}{X_1-1},\\ 
Y_1(x_1,y_1)=&\frac{4(y_1^2-x_1^4)}{y_1(x_1-1)^3(x_1+1)},& & y_1(X_1,Y_1)=\frac{2X_1Y_1-(2\alpha +1)X_1^4+(2\alpha-6)X_1^2-1}{(X_1-1)^4},
\end{align*}
respectively.

The relationship between the rational functions $Z_1,Y_1$ and $X,Y$ in $E_\alpha$ is given by the following transformations.
\begin{align*}
Y_1(\alpha^2+\alpha)=&4(2Y+(\alpha-2)X+\alpha),\\ 
(\alpha^2+\alpha)Z_1-(\alpha^2-3\alpha)=&4X,
\end{align*}
so that
\begin{align*}
Y=&\frac{\alpha((\alpha+1)Y_1+(-\alpha^2+\alpha+2)Z_1+(\alpha^2-5\alpha+2))}{8},\\
Y_1=&\frac{4(2Y+(\alpha-2)X+\alpha)}{\alpha^2+\alpha},\\
 Z_1=&\frac{4X+\alpha^2-3\alpha}{\alpha^2+\alpha}.
\end{align*}

Our goal is to compute 
\[r_{C_\alpha}(\{x_1(X_1,Y_1),y_1(X_1,Y_1)\})[\psi\circ \gamma],\]
where $\gamma$ is the path in $\{S_\alpha=0\}$ defined by $|x_1|=1, |y_1|\geq 1$ (for certain choice of a root $y_1$), that will be made precise later. 
In order to do this, we will consider the pushforward by $\phi$ to the regulator $r_{E\alpha}$ in $E_\alpha$. 

Bosman does this by finding rational functions $a(Z_1,Y_1)$, $b(Z_1,Y_1)$ such that 
\[a(X_1^2,Y_1)x_1(X_1,Y_1)+b(X_1^2,Y_1)y_1(X_1,Y_1)=1.\]
Then, he proves the following result, which we reproduce here for completeness. 
\begin{lem}\label{lem:ab} We have
\[r_{C_\alpha}(\{x_1(X_1,Y_1),y_1(X_1,Y_1)\})[\psi\circ \gamma]=
 -r_{E_\alpha}(\{a(Z_1,Y_1),b(Z_1,Y_1)\})[\phi\circ \psi\circ \gamma].
\]

\end{lem}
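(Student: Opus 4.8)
The plan is to turn the single algebraic relation $a(X_1^2,Y_1)\,x_1+b(X_1^2,Y_1)\,y_1=1$ into a relation between symbols, using it to express $\{x_1,y_1\}$ through the symbol $\{a,b\}$ of functions coming from $E_\alpha$, up to correction terms that I will show contribute nothing to the regulator. Setting $t=a(X_1^2,Y_1)x_1$, so that $b(X_1^2,Y_1)y_1=1-t$, the Steinberg relation appears at the level of forms as
\[\eta\big(a x_1,\,b y_1\big)=\eta(t,1-t)=d D(t),\]
an exact form on $C_\alpha$ away from the support of the divisors of $t$ and $1-t$.

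Next I would expand the left-hand side using that $\eta$ is multiplicative and antisymmetric, obtaining
\[\eta(x_1,y_1)=-\eta(a,b)-\eta(a,y_1)-\eta(x_1,b)+d D(a x_1).\]
Integrating over the closed cycle $\psi\circ\gamma$ kills the exact term. Since $a(X_1^2,Y_1)$ and $b(X_1^2,Y_1)$ are obtained from $a(Z_1,Y_1),b(Z_1,Y_1)$ by the substitution $Z_1=X_1^2$ furnished by $\phi$, the form $\eta(a,b)$ on $C_\alpha$ is the pullback of the corresponding form on $E_\alpha$; the change-of-variables formula then gives $\int_{\psi\circ\gamma}\eta(a,b)=r_{E_\alpha}(\{a,b\})[\phi\circ\psi\circ\gamma]$, which is exactly the right-hand side together with its sign. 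Everything therefore reduces to the vanishing of the two mixed integrals $\int_{\psi\circ\gamma}\eta(a,y_1)$ and $\int_{\psi\circ\gamma}\eta(x_1,b)$.

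To handle the mixed terms I would bring in the involution $\iota\colon(X_1,Y_1)\mapsto(-X_1,Y_1)$ that generates the double cover $\phi$. The functions $a,b$ depend only on $X_1^2$ and are thus $\iota$-invariant, while the explicit maps give $\iota^*x_1=1/x_1$; moreover the coefficient $c(x_1)=x_1^4+\alpha x_1^3+2\alpha x_1^2+\alpha x_1+1$ of $S_\alpha$ is palindromic, $c(1/x_1)=x_1^{-4}c(x_1)$, so that $(1/x_1,1/y_1)$ lies on $\{S_\alpha=0\}$ whenever $(x_1,y_1)$ does, and the explicit formula for $y_1$ identifies $\iota$ with this reciprocal symmetry, giving $\iota^*y_1=1/y_1$. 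Because $\eta(x,1/y)=\eta(1/x,y)=-\eta(x,y)$, the forms $\eta(a,y_1)$ and $\eta(x_1,b)$ are $\iota$-\emph{anti}-invariant, whereas $\eta(a,b)$ is $\iota$-invariant. If the relevant class of $\psi\circ\gamma$ is $\iota$-invariant, then for an anti-invariant form $\omega$ one has $\int_{\psi\circ\gamma}\omega=\int_{\psi\circ\gamma}\iota^*\omega=-\int_{\psi\circ\gamma}\omega$, forcing the mixed integrals to vanish and leaving only the $\eta(a,b)$ contribution.

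The main obstacle is precisely this invariance of the cycle. The set $\{|x_1|=1,\ |y_1|\ge 1\}$ is not literally $\iota$-stable, since $\iota$ sends it to the companion branch $\{|x_1|=1,\ |y_1|\le 1\}$; what saves the argument is that by Remark \ref{remark} only the class in $H_1^-$ matters, and on $|x_1|=1$ complex conjugation already acts by $x_1\mapsto \bar x_1=1/x_1$, matching $\iota$ on the first coordinate. I expect that combining these two symmetries shows the relevant $H_1^-$-class is $\iota$-invariant, so that the mixed integrals vanish and the identity follows; making this precise is exactly the cycle analysis carried out in the second part of this section, which I would invoke to complete the proof.
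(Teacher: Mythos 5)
Your overall strategy --- the Steinberg relation coming from $a x_1+b y_1=1$ combined with the involution $\iota\colon(X_1,Y_1)\mapsto(-X_1,Y_1)$, i.e.\ $(x_1,y_1)\mapsto(1/x_1,1/y_1)$ --- is the right one, and your reduction to the two mixed integrals is correct; but the way you dispose of those mixed terms is a genuine gap. You want $\int_{\psi\circ\gamma}\eta(a,y_1)$ and $\int_{\psi\circ\gamma}\eta(x_1,b)$ to vanish \emph{separately}, which would require $\iota_*[\psi\circ\gamma]=[\psi\circ\gamma]$ in the homology of the open curve on which these forms are closed. As you concede, the integration set is not $\iota$-stable, and your proposed repair does not close the hole: composing with complex conjugation does not help, since on $|x_1|=1$ the map $c\circ\iota$ fixes the $x_1$-coordinate and sends $y_1\mapsto\overline{1/y_1}$, which still has modulus at most $1$; so $c\circ\iota$ again interchanges the branch $|y_1|\ge 1$ with the branch $|y_1|\le 1$. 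Nor can you ``invoke the cycle analysis in the second part of the section'': that analysis computes periods of the invariant differential and never establishes any $\iota$-invariance of the class of $\psi\circ\gamma$. In fact there is no reason for the two mixed integrals to vanish individually; only their \emph{sum} is forced to vanish, as follows.

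The missing idea is to symmetrize the relation rather than the cycle. Since $a$ and $b$ depend only on $(X_1^2,Y_1)$, they are $\iota$-invariant, so pulling back $a x_1+b y_1=1$ by $\iota$ gives a second Steinberg relation $a\cdot(1/x_1)+b\cdot(1/y_1)=1$, whose expansion reads
\[\eta(x_1,y_1)=-\eta(a,b)+\eta(a,y_1)+\eta(x_1,b)+dD(a/x_1),\]
because $\eta(a,1/y_1)=-\eta(a,y_1)$, $\eta(1/x_1,b)=-\eta(x_1,b)$, and $\eta(1/x_1,1/y_1)=\eta(x_1,y_1)$. Averaging this with your identity makes the mixed terms cancel \emph{identically as forms}, leaving
\[\eta(x_1,y_1)+\eta(a,b)=\frac{1}{2}\,d\bigl(D(a x_1)+D(a/x_1)\bigr),\]
an exact form whose integral over the closed cycle $\psi\circ\gamma$ is zero (the Bloch--Wigner function is single-valued and continuous on $\PP^1(\C)$). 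Together with your correct identification $\int_{\psi\circ\gamma}\eta(a,b)=r_{E_\alpha}(\{a,b\})[\phi\circ\psi\circ\gamma]$, this proves the lemma with no invariance assumption on the cycle. This repaired argument is exactly the paper's proof transposed from divisors to forms: the paper adds the two expansions of $(a x_1)\diamond(b y_1)\sim 0$ and of its $\iota$-pullback, where the mixed diamonds cancel because $(1/x_1)=-(x_1)$ and $(1/y_1)=-(y_1)$, concluding $(x_1)\diamond(y_1)\sim-(a)\diamond(b)$ and hence the statement via Theorem \ref{thm:bloch}.
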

\begin{proof}
It suffices to see the identity at the level of the diamond operator, namely, to prove that
\[(x_1(X_1,Y_1))\diamond (y_1(X_1,Y_1))\sim -(a(X_1^2,Y_1))\diamond (b(X_1^2,Y_1)).\]

Because of the triviality of the Steimberg symbol, $(f)\diamond(1-f)\sim 0$, and 
\begin{align*}
0 \sim &(a(X_1^2,Y_1)x_1(X_1,Y_1))\diamond (b(X_1^2,Y_1)y_1(X_1,Y_1))\\
\sim &(a(X_1^2,Y_1))\diamond (b(X_1^2,Y_1))+ (x_1(X_1,Y_1))\diamond  (b(X_1^2,Y_1))\\
&+ (a(X_1^2,Y_1)) \diamond (y_1(X_1,Y_1)) + (x_1(X_1,Y_1))\diamond (y_1(X_1,Y_1)).
\end{align*}
Now consider the automorphism of $S_\alpha(x_1,y_1)=0$ given by $x_1\rightarrow \frac{1}{x_1}$, $y_1\rightarrow \frac{1}{y_1}$.
We remark that $X_1\rightarrow -X_1$ and $Y_1\rightarrow Y_1$. Then
\begin{align*}
0 \sim &(a(X_1^2,Y_1)x_1(-X_1,Y_1))\diamond (b(X_1^2,Y_1)y_1(-X_1,Y_1))\\
\sim &(a(X_1^2,Y_1))\diamond (b(X_1^2,Y_1))- (x_1(X_1,Y_1))\diamond  (b(X_1^2,Y_1))\\
&- (a(X_1^2,Y_1)) \diamond (y_1(X_1,Y_1)) + (x_1(X_1,Y_1))\diamond (y_1(X_1,Y_1)).
\end{align*}
Combining the above expressions, we obtain the result. 
\end{proof}

Following Bosman, we take 
\begin{align*}
a(Z_1,Y_1)=&\frac{(-Z_1^2-6Z_1-1)Y_1+(4\alpha+2)Z_1^3+14Z_1^2+(-4\alpha+14)Z_1+2}{(Z_1-1)((-Z_1-1)Y_1+(2\alpha+1)Z_1^2+(-2\alpha+6)Z_1+1)}\\
=&\frac{2X^2Y+4\alpha^2XY+(\alpha^4-2\alpha^3-\alpha^2)Y+(-3\alpha-4)X^3+(-\alpha^3+2\alpha)X^2+(\alpha^3+2\alpha^2)X-\alpha^3}{(X-\alpha)((\alpha^2-\alpha)Y+2XY-(\alpha+3)X^2+2\alpha X)}
\end{align*}
and
\begin{align*}
b(Z_1,Y_1)=&\frac{(Z_1-1)^2}{(-Z_1-1)Y_1+(2\alpha+1)Z_1^2+(-2\alpha+6)Z_1+1}\\
=&-\frac{(X-\alpha)^2}{(\alpha^2-\alpha)Y+2XY-(\alpha+3)X^2+2\alpha X}.
\end{align*}

We proceed to compute the diamond operation for $(a(X,Y))$ and $(b(X,Y))$.
Consider the following points on $E_\alpha$. 
\begin{align*}
P=&(\alpha,\alpha),\\
U_\pm=&\left(\frac{\alpha(-\alpha\pm\sqrt{\alpha^2-16\alpha+32})}{8},\frac{\alpha^2(\alpha-8\mp \sqrt{\alpha^2-16\alpha+32})}{16} \right),\\
V_\pm=&\left(\frac{-\alpha^2+4\alpha-3\pm (\alpha+1)\sqrt{\alpha^2-10\alpha+9}}{8},\frac{\alpha^3-7\alpha^2-\alpha-9\mp (\alpha^2-2\alpha-3)\sqrt{\alpha^2-10\alpha+9}}{16}\right),\\
\end{align*}
where we also have that $U_++U_-=P$ and $V_++V_-=2P$. Thus we write $U$ for $U_+$, $V$ for $V_+$, $P-U$ for $U_-$, 
and $2P-V$ for $V_-$.

One can check that 
\begin{align*}
(X-\alpha) =&(P)+(5P)-2O,\\
((\alpha^2-\alpha)Y+2XY-(\alpha+3)X^2+2\alpha X)=&2(P)+(2P)+(V)+(2P-V)-5O,\\
\end{align*}
and
\begin{align*}
&(2X^2Y+4\alpha^2XY+(\alpha^4-2\alpha^3-\alpha^2)Y+(-3\alpha-4)X^3+(-\alpha^3+2\alpha)X^2+(\alpha^3+2\alpha^2)X-\alpha^3)\\
=&
5(P)+(U)+(P-U)-7O.\\
\end{align*}
In sum, this gives 
\begin{align*}
(a(Z_1,Y_1))=&2(P)+(U)+(P-U)-(5P)-(2P)-(V)-(2P-V),\\
(b(Z_1,Y_1))=&2(5P)-(2P)-(V)-(2P-V)+O.
\end{align*}

By applying Lemma \ref{lem:ab},
\begin{align}
-(x_1)\diamond (y_1)\sim &5(P)+3(2P)+(U)+(P-U)+3(P+U)+3(2P-U)+(V-U)\nonumber\\
& +(2P-U-V)+(U+V-P)+(U-V+P)-(V)-(2P-V)\nonumber\\
&-3(V+P)+3(V+3P).\label{eq:x1y1}
\end{align}

Now we record other divisors. 
\begin{align*}
(X+\alpha)=&(V-P)+(P-V)-2O,\\
(\alpha X+2Y+\alpha^2)=& (5P)+(U)+(P-U)-3O,\\
(Y)=& 3(2P)-3O.
\end{align*}

The above relations imply
\begin{align*}
\left(\frac{X+\alpha}{Y}\right)=&(V-P)+(P-V)+O-3(2P),\\
\left(\frac{\alpha X+2Y+\alpha^2}{Y} \right)=&(5P)+(U)+(P-U)-3(2P),\\
\end{align*}
and
\begin{align*}
0\sim& \left(-\frac{\alpha(X+\alpha)}{2Y}\right) \diamond\left(\frac{\alpha X+2Y+\alpha^2}{2Y} \right)\\
=&(P)+3(2P)-(U)-(P-U)-3(P+U)-3(2P-U)-(V-U)\\& -(2P-U-V)-(U+V-P)-(U-V+P)+(V)+(2P-V)\\
&+3(V+P)-3(V+3P).\\
\end{align*}

Combining the above equation with \eqref{eq:x1y1} we obtain 
\[(x_1)\diamond (y_1)\sim -6(P)-6(2P).\]

By comparing with equation \eqref{eq:xy}, we conclude,
\[(x_1)\diamond (y_1)\sim (x)\diamond (y).\]

\subsection{The relationship between the cycles}

We consider the integration cycle for the Mahler measure over $P_\alpha$ first. 
It is convenient to make the change of variables $x=x_0^2$ as well as $y_0=y/x_0$. In this case we have
\[(x_0+x_0^{-1})y_0^2+(x_0^2 + (2-\alpha) + x_0^{-2})y_0+(x_0+x_0^{-1})=0.\]
This gives
\begin{align*}
{y_0}_\pm=&\frac{-(x_0^2 + (2-\alpha) + x_0^{-2})}{2(x_0+x_0^{-1})}\\
&\pm \frac{\sqrt{(x_0^2-2x_0+2-\alpha-2x_0^{-1}+x_0^{-2})
(x_0^2+2x_0+2-\alpha+2x_0^{-1}+x_0^{-2})}}{2(x_0+x_0^{-1})}.
\end{align*}
Now write $x_0=e^{i\theta}$ with $0 \leq \theta \leq \pi$. We have
\[{y_0}_\pm=\frac{(\alpha-4 \cos^2\theta)\pm \sqrt{(\alpha -4\cos^2\theta)^2-16 \cos^2\theta
}}{4\cos \theta}.\]

Further taking $t=\cos^2 \theta$, we have that the polynomial inside the square root is 
$16t^2-8(2+\alpha)t+\alpha^2$ which has roots $t=\frac{2+\alpha\pm 2\sqrt{\alpha+1}}{4}$.

When this polynomial takes negative values, both roots are complex conjugate of each other
and both have absolute value 1. We are interested in the cases that the polynomial takes positive values 
and one of the roots has absolute value larger than 1. 

When $0\leq \alpha\leq 4$ this polynomial takes positive values 
for $0\leq t\leq \frac{2+\alpha -2\sqrt{\alpha+1}}{4}$. We can see that in this case $|{y_0}_+|>1$.

When $\alpha \leq -1$, the polynomial inside the square root has no real roots and therefore it is positive for $0\leq t \leq 1$. Both roots are then real. We see that $|{y_0}_-|>1$. 

In order to characterize the homology class given by the integration set, we integrate respect to 
the standard invariant differential $\omega$ of the elliptic curves. Recall that 
\[\omega=\frac{d X}{2Y+(\alpha-2)X+\alpha}.\]
By looking at the transformations, we have
\[d X=-\frac{\alpha(\alpha+1)(dx+dy)}{(x+y-\alpha)^2}.\]
By differentiating $P_\alpha$, we have,
\[(2(y+1)x+y^2+(2-\alpha)y+1)dx+(2(x+1)y+x^2+(2-\alpha)x+1)dy=0.\]
Putting the above together, we obtain,
\begin{align*}
d X=&\frac{\alpha(\alpha+1)(y-x)dx}{(2(x+1)y+x^2+(2-\alpha)x+1)(x+y-\alpha)}\\
=& \frac{\alpha(\alpha+1)y(y-x)dx}{(x+1)(y^2-x)(x+y-\alpha)}.
\end{align*}
Therefore
\begin{align*}
\frac{dX}{2Y+(\alpha-2)X+\alpha}=&\frac{dx}{2(x+1)y+x^2+(2-\alpha)x+1}=\frac{ydx}{(x+1)(y^2-x)}\\
=&\frac{2dx_0}{(2(x_0+x_0^{-1})y_0+x_0^2+(2-\alpha)+x_0^{-2})x_0}.
\end{align*}

At this point, we either have to specify the choice of the root ${y_0}_\pm$ or leave the sign in front of the square-root undetermined. Since all the Mahler measures are non negative, and the integration sets are connected, we can leave the sign to be determined later.  
\[\omega=\pm \frac{2 id \theta}{\sqrt{(\alpha-4\cos^2\theta)^2-16\cos^2 \theta}}.\]

Take $t=\cos^2 \theta$, then $\frac{dt}{\sqrt{t(1-t)}}=-2d\theta$ and
\[\omega=\pm\frac{ id t}{\sqrt{t(1-t)((\alpha-4t)^2-16t)}}\]

In sum, we must consider, for $0\leq \alpha \leq 4$,
\begin{align*}
\int_{\varphi_*(|x|=1)} \omega =&\pm \int_0^\frac{2+\alpha-2\sqrt{\alpha+1}}{4} \frac{2 id t}{\sqrt{t(1-t)((\alpha-4t)^2-16t)}}\\
\end{align*}
and for $\alpha \leq -1$,
\begin{align*}
\int_{\varphi_*(|x|=1)} \omega =&\pm \int_0^1 \frac{2 id t}{\sqrt{t(1-t)((\alpha-4t)^2-16t)}}.
\end{align*}
In both cases, the extra factor $2$ comes from changing $0\leq \theta \leq \pi$ to $0\leq \theta \leq \frac{\pi}{2}$. 

Now we analyze the cycle for $S_\alpha$. 
Make the change of variables $y_0=y_1/x_1^2$. This gives
\[y_0^2+(x_1^2+\alpha x_1+2\alpha +\alpha x_1^{-1}+x_1^{-2})y_0 +1=0\]
and
\begin{align*}
{y_0}_\pm=&\frac{-(x_1^2+\alpha x_1+2\alpha +\alpha x_1^{-1}+x^{-2})}{2}\\
&\pm \frac{\sqrt{(x_1+2+x_1^{-1})(x_1+\alpha-2+x_1^{-1})(x_1^2+\alpha x_1+2(\alpha+1)+\alpha x_1^{-1}+x_1^{-2})}}{2}.\\
\end{align*}
By setting $x_1=e^{i\theta}$ with $0 \leq \theta \leq 2\pi$, we have
\begin{align*}
{y_0}_\pm=&-(2\cos^2 \theta+\alpha \cos \theta+(\alpha-1))\\
&\pm \sqrt{(\cos \theta+1)(2\cos \theta+\alpha-2)(2\cos^2\theta+\alpha\cos \theta +\alpha)}.\\
\end{align*}
Taking $t=\cos \theta$, the polynomial inside the square root
is $(t+1)(2t+\alpha-2)(2t^2+\alpha t+\alpha)$. The roots for the quadratic factor are given by 
$\frac{-\alpha\pm \sqrt{\alpha^2-8\alpha}}{4}$. As before, we are interested in the case when the polynomial inside the square-root takes positive values. 

When $0\leq \alpha \leq 4$, the polynomial is positive for $\frac{2-\alpha}{2}\leq t \leq 1$. 

When
$\alpha \leq -1$, the polynomial is positive for $\frac{-\alpha-\sqrt{\alpha^2-8\alpha}}{4}\leq t \leq 1$. 

In both cases, this leads to a root that has absolute value greater or equal to 1 and another 
that has absolute value less or equal to 1. As observed in the previous case, we
do not have to determine the exact sign of this root as long as each integral is done over a fixed root.

On the other hand, we have,
\[dX=\frac{\alpha(\alpha+1)dZ_1}{4}=\frac{\alpha(\alpha+1)X_1dX_1}{2}=-\frac{\alpha(\alpha+1)(x_1+1)dx_1}{(x_1-1)^3}.\]
We also have
\[2Y+(\alpha-2)X+\alpha=\frac{\alpha(\alpha+1)Y_1}{4}=\frac{\alpha(\alpha+1)(y_1^2-x_1^4)}{y_1(x_1-1)^3(x_1+1)}.\]
Therefore,
\begin{align*}
\frac{dX}{2Y+(\alpha-2)X+\alpha}=&-\frac{(x_1+1)^2y_1dx_1}{(y_1^2-x_1^4)}=\frac{(x_1+1)^2y_1dx_1}{(x_1^4+\alpha x_1^3+2\alpha x_1^2+\alpha x_1 +1)y_1+2x_1^4}.
\end{align*}
Let $y_{0}=y_1/x_1^2$. Then
\begin{align*}
\frac{dX}{2Y+(\alpha-2)X+\alpha}=&\frac{(x_1+2+x_1^{-1})y_0dx_1}{((x_1^2+\alpha x_1+2\alpha +\alpha x_1^{-1} +x_1^{-2})y_0+2)x_1}.\\
\end{align*}
Writing $x_1=e^{i\theta}$ with $0\leq \theta\leq 2\pi$, this leads to 
\[\omega= \pm \frac{(1+\cos \theta)i d  \theta }{\sqrt{(\cos \theta+1)(2\cos \theta+\alpha-2)(2\cos^2\theta+\alpha\cos \theta +\alpha)}}.\]

Take $t=\cos \theta$. Then $-\frac{dt}{\sqrt{1-t^2}}=d \theta$ and 
\[\omega =\pm \frac{ i d t }{\sqrt{(1-t)(2 t+\alpha-2)(2 t^2+\alpha t +\alpha)}}.\]

In sum, for $0\leq \alpha \leq 4$, we must consider
\[\int_{\varphi_*(|x_1|=1)} \omega =\pm \int_\frac{2-\alpha}{2}^1 \frac{2 i d t }{\sqrt{(1-t)(2 t+\alpha-2)(2 t^2+\alpha t +\alpha)}}\]
and for $\alpha\leq -1$, 
\[\int_{\varphi_*(|x_1|=1)} \omega = \pm \int_\frac{-\alpha -\sqrt{\alpha^2-8\alpha}}{4}^1 \frac{2i d t }{\sqrt{(1-t)(2 t+\alpha-2)(2 t^2+\alpha t +\alpha)}}.\]

The result is completed with the following statement which covers the necessary identities, except for the boundary cases, which can be deduced by continuity.  

\begin{lem} 
 For $0< \alpha <8 $, we have 
\begin{equation}\label{eq:0a4}
2\int_0^\frac{2+\alpha-2\sqrt{\alpha+1}}{4} \frac{d t}{\sqrt{t(1-t)((\alpha-4t)^2-16t)}}=\int_\frac{2-\alpha}{2}^1 \frac{dt }{\sqrt{(1-t)(2 t+\alpha-2)(2 t^2+\alpha t +\alpha)}}.
\end{equation}

For $\alpha < -1$, we have
\begin{equation}\label{eq:a-1}
\int_0^1 \frac{d t}{\sqrt{t(1-t)((\alpha-4t)^2-16t)}}=\int_\frac{-\alpha -\sqrt{\alpha^2-8\alpha}}{4}^1 \frac{d t }{\sqrt{(1-t)(2 t+\alpha-2)(2 t^2+\alpha t +\alpha)}}.
\end{equation}

\end{lem}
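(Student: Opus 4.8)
The plan is to recognize that, after the reductions already carried out, both sides of \eqref{eq:0a4} and \eqref{eq:a-1} are real periods of one and the same elliptic curve $E_\alpha$: in each case the integrand is the invariant differential $\omega$ expressed in a rational parameter $t$, integrated between two consecutive real branch points of the quartic under the square root. The left-hand integrands come from the quartic $Q_P(t)=t(1-t)(16t^2-8(\alpha+2)t+\alpha^2)$, whose roots are $0,1,\frac{2+\alpha\pm2\sqrt{\alpha+1}}{4}$, while the right-hand integrands come from $Q_S(t)=(1-t)(2t+\alpha-2)(2t^2+\alpha t+\alpha)$, with roots $1,\ \frac{2-\alpha}{2},\ \frac{-\alpha\pm\sqrt{\alpha^2-8\alpha}}{4}$. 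I would first put $E_\alpha$ in the Weierstrass shape $\omega=dX/\sqrt{g(X)}$ with $g(X)=4X^3+(\alpha-2)^2X^2+2\alpha(\alpha-2)X+\alpha^2$, obtained by completing the square in the denominator $2Y+(\alpha-2)X+\alpha$, and then produce explicit M\"obius substitutions $X=X(t)$ carrying each quartic model to this cubic model. For the $S_\alpha$ side the chain of maps $\psi^{-1}$ and $\phi$ already supplies such a substitution: one computes, for instance, $X=\frac{\alpha(1-\alpha-2t)}{2(1-t)}$ from $Z_1=X_1^2=-\frac{1+t}{1-t}$ together with $Z_1=\frac{4X+\alpha^2-3\alpha}{\alpha^2+\alpha}$; for the $P_\alpha$ side one uses $t=\frac{(x+1)^2}{4x}$ together with $X=X(x,y)$.

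Having matched the differentials, the statement reduces to a comparison of cycles: each integral equals $\int\omega$ over the image in $E_\alpha(\mathbb{C})$ of its integration interval, and I would show that these images represent homology classes in $H_1(E_\alpha,\mathbb{Z})^-$ whose ratio is exactly the constant appearing in the lemma, namely $2$ in \eqref{eq:0a4} and $1$ in \eqref{eq:a-1}. The decisive structural feature is that the real-root configurations of the two quartics are opposite in the two regimes: for $0<\alpha<8$ the discriminant $\alpha^2-8\alpha$ of the factor $2t^2+\alpha t+\alpha$ is negative, so $Q_S$ has two real and two complex roots while $Q_P$ (with $\alpha+1>0$) has four real roots; for $\alpha<-1$ the situation is reversed, $Q_P$ acquiring a complex-conjugate pair and $Q_S$ becoming totally real. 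Tracking the endpoints of the integration intervals through the M\"obius substitutions, and hence determining whether the corresponding arc runs over a real oval of $E_\alpha$ once or twice — which is where the degree-two relation $Z_1=X_1^2$, equivalently $t=\frac{(x+1)^2}{4x}$, manifests itself — yields the multiplicity and pins down the constant.

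I expect this last bookkeeping to be the main obstacle: correctly orienting the arcs, locating the branch points relative to the integration interval, and reading off how many times each arc winds around the real locus of $E_\alpha$ is exactly where the factor $2$ appears for $0<\alpha<8$ but not for $\alpha<-1$, whereas the underlying algebra (expanding the quartics, verifying the M\"obius substitutions, and checking that $dt/\sqrt{Q(t)}$ matches $\pm\,dX/\sqrt{g(X)}$) is routine by comparison. As an alternative, purely analytic route that sidesteps the homology language, I would reduce each period to Legendre normal form $c\,K(k)$ by the standard substitution sending three of the four roots to $0,1,\infty$, compute the modulus $k$ from the cross-ratio of the roots and the constant $c$ from the leading coefficient (note $\sqrt{16}/\sqrt{4}=2$), and then invoke the Landen quadratic transformation relating $K$ at a four-real-root modulus to $K$ at the associated two-real-two-complex modulus; the factor $2$ in \eqref{eq:0a4} is precisely the factor produced by this quadratic transformation, while in \eqref{eq:a-1} the two moduli coincide and no extra factor survives.
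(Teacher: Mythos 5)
Your starting premise is where the argument breaks. The two quartic models are \emph{not} models of one and the same elliptic curve, so no M\"obius substitution can carry both of them onto a common Weierstrass differential $dX/\sqrt{g(X)}$. Your substitution does work on the $S_\alpha$ side: $X=\frac{\alpha(1-\alpha-2t)}{2(1-t)}$ is a genuine M\"obius map identifying $w^2=(1-t)(2t+\alpha-2)(2t^2+\alpha t+\alpha)$ with $E_\alpha$. But on the $P_\alpha$ side the parameter $t=\frac{(x+1)^2}{4x}$ is a degree-$2$ function of $x$, hence a degree-$4$ function on $E_\alpha$; the relation between $t$ and $X$ is $2$-to-$1$, not M\"obius, and the curve $w^2=t(1-t)((\alpha-4t)^2-16t)$ is only $2$-isogenous to $E_\alpha$, not isomorphic to it. The obstruction is visible in the very feature you single out: in each regime one quartic has four real branch points and the other has two (so one curve has two real ovals and the other has one), hence they are not isomorphic over $\R$, and their $j$-invariants differ over $\C$ for generic $\alpha$. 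Consequently the step you call routine --- checking that $dt/\sqrt{Q(t)}$ matches $\pm\, dX/\sqrt{g(X)}$ --- cannot be carried out for the left-hand sides, and the factor $2$ is not produced by an arc winding twice around a real oval reachable by endpoint bookkeeping; it is produced by a degree-$2$ map. The paper makes this explicit: after M\"obius normalizations ($t=\frac{\alpha s-\alpha+2}{2}$, $s=\frac{1-w}{1+\alpha w}$, $w=\frac{1}{1+v}$, $t=\frac{1}{1+4u/\alpha^2}$) the two sides become periods of
$y^2=u\bigl(u^2+2\bigl(\tfrac{\alpha^2}{4}-\alpha-2\bigr)u+\tfrac{\alpha^3(\alpha-8)}{16}\bigr)$
and
$y^2=v\bigl(v^2-\bigl(\tfrac{\alpha^2}{4}-\alpha-2\bigr)v+\alpha+1\bigr)$,
which are related by the standard $2$-isogeny, realized analytically by the substitution $u=v-\bigl(\tfrac{\alpha^2}{4}-\alpha-2\bigr)+\frac{\alpha+1}{v}$.

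Your fallback route (Legendre form plus Landen) is the right idea --- the isogeny substitution above \emph{is} the Landen transformation in Weierstrass coordinates --- but your roadmap misplaces where the transformation is needed. You assert that in \eqref{eq:a-1} ``the two moduli coincide and no extra factor survives'': this is false, since for $\alpha<-1$ the $P$-side quartic has two real roots and the $S$-side quartic four, so the two curves are non-isomorphic and their moduli are genuinely Landen-related; this is exactly the case the paper proves with the isogeny substitution, and it is the case with constant $1$. The case \eqref{eq:0a4} is then deduced from \eqref{eq:a-1} by the parameter symmetry $\alpha\mapsto\beta=-8/\alpha$, which exchanges the two types of integrands up to the explicit prefactor $\frac{|\beta|}{4}$, and it is this bookkeeping --- not the Landen factor itself --- that produces the $2$ in \eqref{eq:0a4}. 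So the proposal is repairable, but only by abandoning the ``same curve, compare cycles via M\"obius maps'' plan and promoting the quadratic (isogeny/Landen) substitution from a side remark to the central step, applied first in the regime $\alpha<-1$.
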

\begin{proof}

First consider the change of variables  $t=\frac{\alpha s -\alpha+2}{2}$. Then for $0< \alpha$,
\[\int_\frac{2-\alpha}{2}^1 \frac{dt }{\sqrt{(1-t)(2 t+\alpha-2)(2 t^2+\alpha t +\alpha)}}
 =\int_0^1 \frac{ds}{\sqrt{s(1-s)(\alpha^2 s^2+\alpha(4-\alpha) s +4)}},\]
and for $\alpha < -1$,
\begin{align*}
&\int_\frac{-\alpha -\sqrt{\alpha^2-8\alpha}}{4}^1 \frac{dt }{\sqrt{(1-t)(2 t+\alpha-2)(2 t^2+\alpha t +\alpha)}}\\
 =&-\int_\frac{\alpha -4 -\sqrt{\alpha^2-8\alpha}}{2\alpha}^1 \frac{ds}{\sqrt{s(1-s)(\alpha^2 s^2+\alpha(4-\alpha) s +4)}}.\\
\end{align*}
The right-hand sides of the above equations are related to the formulas found by Rogers and Zudilin \cite{RZ12} and Bertin and Zudilin \cite{BZ}, and this allows us to use their changes of variables to manipulate those sides of the equations.
However, the left-hand sides do not appear in those works. As we will eventually see, the connection between the two sides are given by periods in two isogenous elliptic curves.

Notice that we can modify the integration limits in the last integral by the involution $s=\frac{1-w}{1+\alpha w}$, which  gives  for $\alpha < -1$,
\begin{align*}
&-\int_\frac{\alpha -4 -\sqrt{\alpha^2-8\alpha}}{2\alpha}^1 \frac{ds}{\sqrt{s(1-s)(\alpha^2 s^2+\alpha(4-\alpha) s +4)}}\\
 =&\int_0^\frac{\alpha -4 +\sqrt{\alpha^2-8\alpha}}{2\alpha} \frac{dw}{\sqrt{w(1-w)(\alpha^2 w^2+\alpha(4-\alpha) w +4)}}.
\end{align*}
In sum, we have to prove,
for $0< \alpha < 8$. 
\begin{equation} \label{arghh2}
2\int_0^\frac{2+\alpha-2\sqrt{\alpha+1}}{4} \frac{d t}{\sqrt{t(1-t)((\alpha-4t)^2-16t)}}
 =\int_0^1 \frac{ds}{\sqrt{s(1-s)(\alpha^2 s^2+\alpha(4-\alpha) s +4)}}
 \end{equation}
and for $\alpha < -1$,
\begin{equation}\label{arghh}
\int_0^1 \frac{d t}{\sqrt{t(1-t)((\alpha-4t)^2-16t)}}=\int_0^\frac{\alpha -4 +\sqrt{\alpha^2-8\alpha}}{2\alpha} \frac{dw}{\sqrt{w(1-w)(\alpha^2 w^2+\alpha(4-\alpha) w +4)}}.
\end{equation}

First we concentrate on equation \eqref{arghh}. Consider the change $t=\frac{1}{1+\frac{4u}{\alpha^2}}$. Then the left-hand side of equation \eqref{arghh} becomes 
\begin{equation}\label{eq:first}
\int_0^1 \frac{d t}{\sqrt{t(1-t)((\alpha-4t)^2-16t)}}=\frac{1}{2}\int_0^\infty \frac{du}{
\sqrt{u\left(u^2+2\left(\frac{\alpha^2}{4} - \alpha - 2\right)u+\frac{\alpha^3}{16}(\alpha-8)\right)
}}.
\end{equation}

Consider $w=\frac{1}{1+v}$. The right-hand side of equation \eqref{arghh} becomes 
\begin{align}
&\int_0^\frac{\alpha -4 +\sqrt{\alpha^2-8\alpha}}{2\alpha} \frac{dw}{\sqrt{w(1-w)(\alpha^2 w^2+\alpha(4-\alpha) w +4)}}\nonumber \\
\label{eq:second}=&\frac{1}{2} \int_\frac{\alpha^2-4\alpha-8-\alpha \sqrt{\alpha^2-8\alpha}}{8}^\infty \frac{dv}{\sqrt{v(v^2-\left(\frac{\alpha^2}{4}-\alpha-2\right)v+\alpha+1)}}.
\end{align}

The integrals on the right-hand sides of equations \eqref{eq:first} and \eqref{eq:second} correspond to the same
periods in isogenous elliptic curves. We can use the standard isogeny of degree 2 for the Weierstrass form $y^2=x(x^2+ax+b)$ to describe the change of variables between them. More precisely,
$u=v-\left(\frac{\alpha^2}{4}-\alpha-2\right)+\frac{\alpha+1}{v}$ yields
\begin{align*}
&\int_0^\infty \frac{du}{\sqrt{u\left(u^2+2\left(\frac{\alpha^2}{4} - \alpha - 2\right)u+\frac{\alpha^3}{16}(\alpha-8)\right)}}\\
=&\int_\frac{\alpha^2-4\alpha-8-\alpha \sqrt{\alpha^2-8\alpha}}{8}^\infty \frac{dv}{\sqrt{v\left(v^2 -\left(\frac{\alpha^2}{4}-\alpha-2\right)v+\alpha+1\right)}}.
\end{align*}
This concludes the proof of equation \eqref{arghh} and therefore of equation \eqref{eq:a-1}. 

For \eqref{arghh2}, consider the following observation. If we set $\beta=-\frac{8}{\alpha}$, then 
 \[\int \frac{dt}{\sqrt{t(1-t)((\alpha-4t)^2-16t)}}=\frac{|\beta|}{4}\int \frac{dt}{\sqrt{t(1-t)(\beta^2 t^2 + \beta(4-\beta)t + 4)}}.\]

 Applying the above transformation to equation \eqref{arghh}, we get, for $0< \beta < 8$,
\begin{align*}
& \frac{\beta}{4}\int_0^1 \frac{dt}{\sqrt{t(1-t)(\beta^2 t^2 + \beta(4-\beta)t + 4)}}\\
 =&
\frac{4}{\left|\frac{-8}{\beta}\right|} \int_0^\frac{2+\beta -2\sqrt{\beta +1}}{4}
\frac{d w}{\sqrt{w(1-w)((\beta-4w)^2-16w)}},
\end{align*}
which implies equations \eqref{arghh2} and \eqref{eq:0a4}.

\end{proof}

\section{The families from Theorem \ref{thm:BZ2}} \label{sec:2}

\subsection{The relationship between the regulators}

In this section we work with the families 
\[Q_\alpha(x_2,y_2)=(x_2^2+x_2+1)y_2^2+\alpha x_2(x_2+1) y_2 + x_2(x_2^2+x_2+1)\]
and
\[R_\beta(x_3,y_3)=(x_3^2+x_3+1)y_3^2+(x_3^4+\beta x_3^3+(2\beta-4)x_3^2+\beta x_3+1)y_3+x_3^2(x_3^2+x_3+1).\]

For $Q_\alpha(x_2,y_2)$, Boyd \cite{Bo98} writes 
\[Y_2^2=h_2(X_2^2),\]
where 
\[h_2(Z_2)= (\alpha^2 -9)Z_2^3 - (2\alpha^2 - 3)Z_2^2 +(\alpha^2 + 5)Z_2 + 1,\]
and
\begin{align*}
X_2(x_2,y_2)= & \frac{x_2+1}{x_2-1},& & x_2(X_2,Y_2)=\frac{X_2+1}{X_2-1},\\
Y_2(x_2,y_2)=& \frac{4(2(x_2^2+x_2+1)y_2+\alpha x_2(x_2+1))}{(x_2-1)^3},& & y_2(X_2,Y_2)=\frac{Y_2-\alpha X_2 (X_2^2-1)}{(X_2-1)(3X_2^2+1)}.\\ 
\end{align*}

By applying the transformation 
\[Z=(\alpha^2-9)(Z_2-1), \qquad W=(\alpha^2-9)Y_2,\]
we obtain 
\[F_\alpha: W^2=Z^3 +(\alpha^2-24)Z^2-16(\alpha^2 -9)Z.\]

Our goal as before is to compute  $(x_2)\diamond (y_2)$ in $F_\alpha$. We will do this by applying 
Lemma \ref{lem:ab}. Thus we take
\begin{align*}
a(Z_2,Y_2)=&\frac{Y_2-\alpha(Z_2-1)}{Y_2+\alpha(Z_2-1)}=\frac{W-\alpha Z}{W+\alpha Z},\\
b(Z_2,Y_2)=&-\frac{2(3Z_2+1)}{Y_2+\alpha(Z_2-1)}=-\frac{2(3Z+4(\alpha^2-9))}{W+\alpha Z},\\
\end{align*}
and we can easily see that 
\[a(X_2^2,Y_2)x_2(X_2,Y_2)+b(X_2^2,Y_2) y_2(X_2,Y_2)=1.\]
Lemma \ref{lem:ab} still applies with the same change $x_2\rightarrow \frac{1}{x_2}$ and $y_2\rightarrow \frac{1}{y_2}$ that leads to $X_2\rightarrow -X_2$ and $Y_2\rightarrow Y_2$. 

We consider the following points of $F_\alpha$ (in $Z, W$ coordinates).
\begin{align*}
P=&(0,0),\\
S_\pm=&(\pm 4\alpha+12,\alpha(\pm 4\alpha+12)),\\
T=&\left(-\frac{4(\alpha^2-9)}{3},\frac{4i(\alpha-3)\alpha(\alpha+3)}{3\sqrt{3}}\right),\\
\end{align*}
where $S_++S_-=P$. Thus we rename $S$ to be $S_+$ and $P-S$ to be $S_-$. Notice also that $2P=O$

We compute some divisors. 
\begin{align*}
(W-\alpha Z)=& (S)+(P-S)+(P)-3O,\\ 
(W+\alpha Z)=& (-S)+(P+S)+(P)-3O,\\ 
(3Z+4(\alpha^2-9))=& (T)+(-T)-2O.
\end{align*}
This leads to 
\begin{align*}
(a(Z_2,Y_2))=&(S)+(P-S)-(-S)-(P+S),\\
(b(Z_2,Y_2))=&(T)+(-T)+O-(-S)-(P+S)-(P),
\end{align*}
and 
\[(a(Z_2,Y_2))\diamond (b(Z_2,Y_2))=2(S-T)+2(S+T)-2(P+S+T)-2(P+S-T)+4(S)-4(P+S).\]

Finally, by Lemma \ref{lem:ab}, we conclude,
\begin{equation}\label{eq:x2y2}
-(x_2)\diamond (y_2)\sim 2(S-T)+2(S+T)-2(P+S+T)-2(P+S-T)+4(S)-4(P+S).
\end{equation}

We now consider the case of $R_\beta(x_3,y_3)$. We have
\[Y_3^2=h_3(X_3^2),\]
where
\[h_3(Z_3)=(\beta^2-\beta-2)Z_3^3+(-2\beta^2+11\beta-2)Z_3^2+(\beta^2-11\beta+26)Z_3+\beta-6,\]
and
\begin{align*}
X_3(x_3,y_3)=&\frac{x_3+1}{x_3-1},\\
Y_3(x_3,y_3)=&\frac{4(2(x_3^2+x_3+1)y_3+x_3^4+\beta x_3^3+(2\beta-4)x_3^2+\beta x_3+1)}
{(x_3-1)^3(x_3+1)},\\
x_3(X_3,Y_3)=&\frac{X_3+1}{X_3-1},\\
y_3(X_3,Y_3)=&\frac{2X_3Y_3-(2\beta-1)X_3^4+(2\beta-10)X_3^2+1}{(X_3-1)^2(3X_3^2+1)}.\\
\end{align*}
By applying the transformation 
\[Z=(\beta^2-\beta-2)Z_3-(\beta^2-5\beta-6), \quad W=(\beta^2-\beta-2)Y_3,\]
we obtain
\[W^2=Z^3 + (\beta^2-4\beta-20)Z^2 - 16(\beta^2-4\beta -5)Z.\]
Notice that this is precisely $F_{\beta-2}$. 

We proceed to compute the diamond operation $(x_3)\diamond (y_3)$.
Using the usual strategy of Lemma \ref{lem:ab}, we find 
\begin{align*}
a(Z_3,W_3)=&\frac{(Z_3+1)Y_3-(2\beta-1)Z_3^2+(2\beta-10)Z_3+1}{(Z_3-1)Y_3}\\
=&\frac{ZW+2(\beta^2-3\beta-4)W-(2\beta-1)Z^2-2(\beta^3-5\beta^2-10\beta-4)Z+
16(\beta^3-3\beta^2-9\beta-5)}{W(Z-4(\beta+1))}\\
b(Z_3,W_3)=&-\frac{3Z_3+1}{Y_3}=-\frac{3Z+4(\beta^2-4\beta-5)}{W}\\
\end{align*}
and one can easily see that 
\[a(X_3^2,Y_3)x_3(X_3,Y_3)+b(X_3^2,Y_3) y_3(X_3,Y_3)=1.\]
Lemma \ref{lem:ab} still applies with the same change $x_3\rightarrow \frac{1}{x_3}$ 
and $y_3\rightarrow \frac{1}{y_3}$ that leads to $X_3\rightarrow -X_3$ and 
$Y_3\rightarrow Y_3$.


We consider the following points (in $Z,W$ coordinates), 
\begin{align*}
P=&(0,0),\\
A=&\left(\frac{-(\beta^2-4\beta-20)+\sqrt{\beta^4 - 8\beta^3 + 40\beta^2 - 96\beta + 80}}{2},0\right),\\
A+P=&\left(\frac{-(\beta^2-4\beta-20)-\sqrt{\beta^4 - 8\beta^3 + 40\beta^2 - 96\beta + 80}}{2},0\right),\\
S=&(4(\beta+1),4(\beta-2)(\beta+1)),\\
2S=&(16,-16),\\
T=&\left(-\frac{4(\beta-5)(\beta+1)}{3},\frac{4i(\beta-5)(\beta-2)(\beta+1)}{3\sqrt{3}}\right),\\
P-S=&(4(5-\beta),-4(\beta-5)(\beta-2)),\\
P-2S=&((5-\beta)(\beta+1),(\beta-5)(\beta+1)).\\
\end{align*}
Notice that the points $P,S,T$ are the same that were previously considered in $F_\alpha$. The formulas are different since they depend on the parameter $\beta$.

We then obtain 
\begin{align*}
(W)=&(P)+(A)+(A+P)-3O,\\
(Z-4(\beta+1)) = &(S)+(-S)-2O,\\
(3Z+4(\beta-5)(\beta+1))=&(T)+(-T)-2O,\\
\end{align*}
and
\begin{align*}
&(ZW+2(\beta^2-3\beta-4)W-(2\beta-1)Z^2-2(\beta^3-5\beta^2-10\beta-4)Z+16(\beta^3-3\beta^2-9\beta-5))\\
&=3(S)+(P-S)+(P-2S)-5O.\\
\end{align*}

This implies
\begin{align*}
(a(Z_3,Y_3))=&2(S)+(P-S)+(P-2S)-(P)-(A)-(A+P)-(-S),\\
(b(Z_3,Y_3))=&(T)+(-T)+O-(P)-(A)-(A+P).\\
\end{align*}

Thus,
\begin{align}\label{eq:x3y3}
-(x_3)\diamond (y_3)=&(a(X_3^2,Y_3))\diamond (b(X_3^2,Y_3))\nonumber\\
=&3(S-T)+3(S+T)+4(S)-4(P+S)-(P+S+T)-(P+S-T)\nonumber\\
&+(2S)-(P+2S)-(P+2S+T)+(P-2S+T)-2(S+A)\nonumber \\&+(2S+A)-2(S+A+P)+(2S+A+P).
\end{align}


Now consider
\begin{align*}
(W-3Z-4(\beta-5)(\beta+1))=&(S)+(P+S)+(P-2S)-3O\\
\end{align*}
and
\begin{align*}
\left(\frac{W-3Z-4(\beta-5)(\beta+1)}{W}\right) = & (S)+(P+S)+(P-2S)-(P)-(A)-(A+P)\\
\left(\frac{3Z+4(\beta-5)(\beta+1)}{W}\right) = & (T)+(-T)+O-(P)-(A)-(A+P).\\
\end{align*}
Combining the above divisors, we have
\begin{align*}
&\left(\frac{W-3Z-4(\beta-5)(\beta+1)}{W}\right) \diamond \left(\frac{3Z+4(\beta-5)(\beta+1)}{W}\right)=
(S-T)+(S+T)\\&+(P+S+T)+(P+S-T)+(P-2S+T)+(P-2S-T)
+(2S)-(P+2S)\\&-2(S+A)-2(S+A+P)+(2S+A)+(2S+A+P).
\end{align*}

By comparing with equations \eqref{eq:x2y2} and \eqref{eq:x3y3}, we get 
\[(x_2)\diamond (y_2) \sim (x_3)\diamond (y_3).\]

\subsection{The relationship between the cycles}

We start by considering $Q_\alpha$.
It is convenient to make the change of variables $x_2=x_0^2$ and $y_2=y_0x_0$. We then consider
\[(x_0^2+1+x_0^{-2})y_0^2+\alpha(x_0+x_0^{-1}) y_0 + (x_0^2+1+x_0^{-2})=0.\]
In this case we have
\begin{align*}
{y_0}_\pm = &\frac{-\alpha (x_0+x_0^{-1})}{2(x_0^2+1+x_0^{-2})}\\&\pm \frac{\sqrt{-(2x_0^2-\alpha x_0+2-\alpha x_0^{-1}+2x_0^{-2})(2x_0^2+\alpha x_0+2+\alpha x_0^{-1}+2x_0^{-2})}}{2(x_0^2+1+x_0^{-2})}.
\end{align*}
Write $x_0=e^{i\theta}$ with $0\leq \theta \leq \pi$. We have
\[{y_0}_\pm = \frac{-\alpha \cos \theta \pm \sqrt{ \alpha^2 \cos^2 \theta - (4\cos^2 \theta-1)^2}}{4\cos^2 \theta -1}.\]
If we take $t=\cos^2 \theta$, the polynomial inside the square root is $-16t^2+(8+\alpha^2)t-1$ and is positive when $\alpha\geq 4$ for $\frac{8+\alpha^2-\alpha\sqrt{\alpha^2+16}}{32}\leq t \leq 1$. As observed in the previous section, we do not have to determine which of the roots has absolute value greater or equal than 1. 

We evaluate $\omega=\frac{d Z}{2W}$. First we have
\[dZ=-4(\alpha^2-9)\frac{x_2+1}{(x_2-1)^3}dx_2.\]
Therefore,
\begin{align*}
\frac{d Z}{2W}=&-\frac{x_2+1}{2(2(x_2^2+x_2+1)y_2+\alpha x_2(x_2+1))}dx_2\\
=&-\frac{x_0+x_0^{-1}}{2(x_0^2+1+x_0^{-2})y_0+\alpha (x_0+x_0^{-1})}\frac{dx_0}{x_0}.\\
\end{align*}
Writing $x_0=e^{i\theta}$, this leads to 
\[\omega =\pm\frac{\cos \theta i d \theta}{\sqrt{ \alpha^2 \cos^2 \theta - (4\cos^2 \theta-1)^2}}.\]
Take $t=\cos^2 \theta$ and $d \theta= -\frac{dt}{2\sqrt{t(1-t)}}$. 
\[\omega=\pm\frac{i d t}{2\sqrt{(1-t)(\alpha^2t -(4t-1)^2)}}\]
We must consider for $\alpha\geq 4$, 
\[\int_{\varphi_*(|x_2|=1)} \omega =\pm \int_{\frac{8+\alpha^2-\alpha\sqrt{\alpha^2+16}}{32}}^1 \frac{i d t}{\sqrt{(1-t)(\alpha^2t -(4t-1)^2)}},\]
where the extra factor $2$ comes from changing $0\leq \theta \leq \pi$ to $0\leq \theta \leq \frac{\pi}{2}$.

We consider $R_\beta$. 
We write  $y_3=y_0x_3$. Then we have
\[(x_3+1+x_3^{-1})y_0^2+(x_3^2+\beta x_3+(2\beta-4)+\beta x_3^{-1}+x_3^{-2})y_0+(x_3+1+x_3^{-1})=0.\]
\begin{align*}
{y_0}_\pm =&\frac{-(x_3^2+\beta x_3+(2\beta-4)+\beta x_3^{-1}+x_3^{-2})}{2(x_3+1+x_3^{-1})}\\
&\pm \frac{\sqrt{(x_3+2+x_3^{-1})
(x_3+(\beta-4)+x_3^{-1})(x_3^2+(\beta+2) x_3+2(\beta-1)+(\beta+2)x_3^{-1}
+x_3^{-2})}}
 {2(x_3+1+x_3^{-1})}.
 \end{align*}
Write $x_3=e^{i\theta}$ with $0\leq \theta \leq 2\pi$,
\begin{align*}
{y_0}_\pm =&\frac{-(4\cos^2 \theta+2\beta \cos \theta + (2\beta-6))}{2(1+2 \cos \theta)}\\
&\pm \frac{\sqrt{2(1+\cos \theta)
((\beta-4)+2\cos \theta)(4\cos^2 \theta +2(\beta+2)\cos \theta +2(\beta-2))}}
 {2(1+2 \cos \theta)}.
 \end{align*}
Write $t=\cos \theta$, the polynomial inside the square root is 
\[2(1+t)((\beta-4)+2t)(4t^2 +2(\beta+2)t+2(\beta-2)).\]
For $\beta \geq 6$, we must consider $\frac{-(\beta+2)+\sqrt{\beta^2-4\beta+20}}{4}\leq t\leq 1$ for the polynomial inside the square root to be positive. 
 
We evaluate $\omega=\frac{dZ}{2W}$. We have 
\[dZ=-4(\beta^2-\beta-2)\frac{x_3+1}{(x_3-1)^3} dx_3.\]
\[\frac{dZ}{2W}= - \frac{x_3+2+x_3^{-1}}{2(2(x_3+1+x_3^{-1})y_0+x_3^2+\beta x_3+(2\beta-4)+\beta x_3^{-1}+x_3^{-2})}\frac{dx_3}{x_3}\]

\[\omega = \pm \frac{(1+\cos \theta)i d  \theta}{\sqrt{2(1+\cos \theta)
((\beta-4)+2\cos \theta)(4\cos^2 \theta +2(\beta+2)\cos \theta +2(\beta-2))}}\]
Take $t=\cos \theta$, then 
\[\omega = \pm \frac{i d  t}{2\sqrt{(1-t)
((\beta-4)+2t)(2t^2 +(\beta+2)t +(\beta-2))}}.\]

Therefore, we must consider
\begin{align*}
&\int_{\varphi_*(|x_3|=1)} \omega \\=&\pm \int_{\frac{-(\beta+2)+\sqrt{\beta^2-4\beta+20}}{4}}^1
\frac{i d  t}{\sqrt{(1-t)
((\beta-4)+2t)(2t^2 +(\beta+2)t +(\beta-2))}},
\end{align*}
where the extra factor $2$ comes from changing $0\leq \theta \leq 2\pi$ to $0\leq \theta \leq \pi$.

Since we have $\alpha=\beta-2$, we must prove for $\alpha \geq 4$
\begin{align*}
&\int_{\frac{8+\alpha^2-\alpha\sqrt{\alpha^2+16}}{32}}^1 \frac{dt}{\sqrt{(1-t)(\alpha^2t -(4t-1)^2)}}\\
=&\int_{\frac{-(\alpha+4)+\sqrt{\alpha^2+16}}{4}}^1
\frac{d s}{\sqrt{(1-s)
(2s+\alpha-2)(2s^2 +(\alpha+4)s +\alpha)}}.
\end{align*}
In fact, we can go from one side to the other by setting
\[t=\frac{(\alpha+1)s+\alpha-1}{2(2s+\alpha-2)}.\] 
(This change of variables can be deduced from \cite{BZ2}.)

\section{Conclusion}
We have seen a way to reinterpret the identities between the Mahler measures of genus 2 curves 
by using the regulator. The use of the regulator highlights the expected relationship of each formula with the $L$-value. 
As a method of proof, this is limited by the requirement that the parameter leads to an integral model. 

All the polynomials involved in our examples can be seen as reciprocal. A consequence of this is that the product of both roots $y_\pm$ 
is 1, which allows us to always pick exactly one root for the integration. The reciprocity of the polynomials 
is also closely related to the property that the involution $x\rightarrow \frac{1}{x}$, $y\rightarrow \frac{1}{y}$
is equivalent to 
$X\rightarrow -X$, $Y\rightarrow Y$. This condition is key for the application 
of Lemma \ref{lem:ab}. A question remains if the method can be extended to other examples where this is 
not satisfied.

It would be interesting to see if this method can be applied to more complicated identities of higher genus, such as those recently found numerically by 
Liu and Qin \cite{HH}.

\section*{Acknowledgments}

The authors are grateful to Hang Liu for making them aware of his work with Hourong Qin and 
to the referee for the very careful reading and the thoughtful corrections.

\bibliographystyle{amsalpha}


\begin{thebibliography}{{Mel}12}

\bibitem[Bl80]{Beilinson}
A.~A. Be\u\i~linson, \emph{Higher regulators and values of {$L$}-functions of
  curves}, Funktsional. Anal. i Prilozhen. \textbf{14} (1980), no.~2, 46--47.
  \MR{575206}

\bibitem[Blo00]{Bloch}
Spencer~J. Bloch, \emph{Higher regulators, algebraic {$K$}-theory, and zeta
  functions of elliptic curves}, CRM Monograph Series, vol.~11, American
  Mathematical Society, Providence, RI, 2000. \MR{1760901}

\bibitem[Bos04]{Bosman}
Johan Bosman, \emph{Boyd's conjecture for a family of genus 2 curves}, 2004,
  Thesis--Universiteit Utrecht.

\bibitem[Boy98]{Bo98}
David~W. Boyd, \emph{Mahler's measure and special values of {$L$}-functions},
  Experiment. Math. \textbf{7} (1998), no.~1, 37--82. \MR{1618282}

\bibitem[BZ16]{BZ}
Marie~Jos\'e Bertin and Wadim Zudilin, \emph{On the {M}ahler measure of a
  family of genus 2 curves}, Math. Z. \textbf{283} (2016), no.~3-4, 1185--1193.
  \MR{3519999}

\bibitem[BZ17]{BZ2}
\bysame, \emph{On the {M}ahler measure of hyperelliptic families}, Ann. Math.
  Qu\'e. \textbf{41} (2017), no.~1, 199--211. \MR{3639657}

\bibitem[Den97]{Deninger}
Christopher Deninger, \emph{Deligne periods of mixed motives, {$K$}-theory and
  the entropy of certain {${\bf Z}^n$}-actions}, J. Amer. Math. Soc.
  \textbf{10} (1997), no.~2, 259--281. \MR{1415320}

\bibitem[LQ]{HH}
Hang {Liu} and Hourong {Qin}, \emph{Mahler measure of polynomials defining
  genus 2 and genus 3 curves and special values of {$L$}-functions}, in
  preparation.

\bibitem[{Mel}12]{Me12}
A.~{Mellit}, \emph{Elliptic dilogarithms and parallel lines}, ArXiv e-prints
  (2012).

\bibitem[RV99]{RV}
F.~Rodriguez-Villegas, \emph{Modular {M}ahler measures. {I}}, Topics in number
  theory ({U}niversity {P}ark, {PA}, 1997), Math. Appl., vol. 467, Kluwer Acad.
  Publ., Dordrecht, 1999, pp.~17--48. \MR{1691309}

\bibitem[RV02]{RV2}
Fernando Rodriguez-Villegas, \emph{Identities between {M}ahler measures},
  Number theory for the millennium, {III} ({U}rbana, {IL}, 2000), A K Peters,
  Natick, MA, 2002, pp.~223--229. \MR{1956277}

\bibitem[RZ12]{RZ12}
Mathew Rogers and Wadim Zudilin, \emph{From {$L$}-series of elliptic curves to
  {M}ahler measures}, Compos. Math. \textbf{148} (2012), no.~2, 385--414.
  \MR{2904192}

\bibitem[Zud14]{Zu14}
Wadim Zudilin, \emph{Regulator of modular units and {M}ahler measures}, Math.
  Proc. Cambridge Philos. Soc. \textbf{156} (2014), no.~2, 313--326.
  \MR{3177872}

\end{thebibliography}

\providecommand{\bysame}{\leavevmode\hbox to3em{\hrulefill}\thinspace}
\providecommand{\MR}{\relax\ifhmode\unskip\space\fi MR }
\providecommand{\MRhref}[2]{%
  \href{http://www.ams.org/mathscinet-getitem?mr=#1}{#2}
}
\providecommand{\href}[2]{#2}

\end{document}